\begin{document}
\title{Choiceless cardinals and the continuum problem}
\author{Gabriel Goldberg}
\maketitle
\begin{abstract}
    Under large cardinal hypotheses beyond the
    Kunen inconsistency --- hypotheses so strong as to contradict the Axiom of Choice ---
    we solve several variants of the generalized continuum problem
    and identify structural features of the levels \(V_\alpha\) of the cumulative hierarchy of sets
    that are eventually periodic, alternating according to the parity of the ordinal \(\alpha\). 
    For example, if there is an elementary embedding
    from the universe of sets to itself, 
    then for sufficiently large ordinals \(\alpha\), 
    the supremum of the lengths of all wellfounded relations on \(V_\alpha\)
    is a strong limit cardinal
    if and only if \(\alpha\) is odd.
\end{abstract}
\section{Introduction}
We write \(X\leq^* Y\) to abbreviate the statement that there is a surjective partial function from \(Y\) to \(X\). 
The \textit{Lindenbaum number} of a set \(X\) is the cardinal \(\aleph^*(X) = \sup \{\eta+1 : \eta \leq^* X\}\).
(The term is due to Karagila.)
For each ordinal \(\alpha\), we define the \textit{\(\alpha\)-th Lindenbaum number}:
\[\theta_\alpha = \sup_{x\in V_\alpha}\aleph^*(x)\]
If \(\alpha\) is a successor ordinal, then \(\theta_{\alpha} = \aleph^*(V_{\alpha-1})\) is the supremum of
the lengths of all wellfounded relations on \(V_{\alpha-1}\),
and if \(\gamma\) is a limit ordinal, then \(\theta_\gamma = \sup_{\xi < \gamma} \theta_\xi\).
Thus \(\theta_\omega = \omega\) and \(\theta_{\omega+1} = \omega_1\). 

The Lindenbaum numbers provide a rough measure of the size of the levels of the cumulative hierarchy.
Assuming the Axiom of Choice, \(\aleph^*(X) = |X|^+\) for all sets \(X\), and so
the generalized continuum hypothesis is equivalent to the assertion that
\(\theta_{\omega+\xi} = \aleph_{\xi}\) for all ordinals \(\xi\). 
In this paper, however, we will avoid the Axiom of Choice in order to study large cardinal hypotheses
so strong as to contradict it. Our main theorem shows that these choiceless large cardinal hypotheses 
imply a dramatic failure of the generalized continuum hypothesis at alternating levels of the cumulative hierarchy:
\begin{thm*}
    Assume there is an elementary embedding from the universe of sets to itself.
    Then for all sufficiently large limit ordinals \(\gamma\) and all natural numbers \(n\), the following hold:
    \begin{enumerate}[(1)]
        \item \(\theta_{\gamma+2n}\) 
        is a strong limit cardinal: that is, for no \(\eta < \theta_{\gamma+2n}\) is \(\theta_{\gamma+2n}\leq^* P(\eta)\).
        \item \(\theta_{\gamma+2n+1}\) is not a strong limit cardinal: in fact,
        \(\theta_{\gamma+2n+1}\leq^* P(\theta_{\gamma+2n})\).
    \end{enumerate}
\end{thm*}
This is the content of Theorems \ref{thm:even} and \ref{thm:odd}.
The main plan of attack is sketched 
in \cref{section:periodicity}
following the statements of \cref{thm:strong_suzuki}
and \cref{UltrafilterUndefinabilityThm}.

An ordinal \(\epsilon\) is \textit{even} if \(\epsilon = \gamma+2n\) for some limit ordinal \(\gamma\) and natural number \(n\);
all other ordinals are \textit{odd}. The theorem above highlights a key difference
between the structure of the even and odd levels of the cumulative hierarchy under
choiceless large cardinal hypotheses. It also indicates an analogy with the Axiom of Determinacy (AD), which has
similar ramifications for much smaller Lindenbaum numbers: under AD,
the cardinal \(\theta_{\omega+2}\), which is usually denoted by \(\Theta\), is a strong limit cardinal
whereas \(\theta_{\omega+3} = \Theta^+\). The former is a theorem of Moschovakis \cite{Moschovakis}, 
while the latter is \cref{prp:AD}, a 
simple observation of the author's.

The first instances of the periodicity phenomena around choiceless cardinals 
were discovered by Schlutzenberg and the author independently \cite{BergBerg}.
\begin{thm}[Goldberg, Schlutzenberg]\label{thm:periodicity}
    Suppose \(\alpha\) is an ordinal. If \(j : V_\alpha\to V_\alpha\) is an elementary embedding,
    \(j\) is definable over \(V_{\alpha}\) from parameters if and only if \(\alpha\) is odd.\qed
\end{thm}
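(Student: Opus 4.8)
\emph{Plan.} I would prove the two implications separately, assuming throughout that $j$ is nontrivial --- the identity is definable over every $V_\alpha$. Then $\kappa:=\mathrm{crit}(j)<\alpha$, the critical sequence $\kappa_0=\kappa$, $\kappa_{n+1}=j(\kappa_n)$ lies inside $V_\alpha$, its supremum $\lambda$ is a limit ordinal in which the $\kappa_n$ are cofinal, one has $\lambda<\alpha$ when $\alpha$ is a successor, and $j$ fixes each of $\lambda,\lambda+1,\lambda+2,\dots$ . I also use that an ordinal is odd precisely when it is the successor of an even ordinal, so that for odd $\alpha$ we may write $\alpha=\mu+1$ with $\mu$ even; then $j(\mu)=\mu$, and since $V_\mu$ is definable over $(V_{\mu+1},\in)$ without parameters, $j$ restricts to an elementary self-embedding $j\restriction V_\mu$ of $V_\mu$.

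\emph{Odd $\Rightarrow$ definable.} The prototype is $\alpha=\lambda+1$ with $\lambda$ a limit: here $V_\lambda=\bigcup V_{\lambda+1}$ is definable over $(V_{\lambda+1},\in)$ without parameters, $k:=j\restriction V_\lambda$ lies in $V_{\lambda+1}$, and $j(A)=\bigcup_{\xi<\lambda}k(A\cap V_\xi)$ for every $A\in V_{\lambda+1}$ --- the inclusion $\supseteq$ is monotonicity, and $\subseteq$ holds since any $y\in j(A)\subseteq V_\lambda$ lies in some $V_\xi$, hence in $j(A)\cap V_{j(\xi)}=j(A\cap V_\xi)$ --- so this formula defines $j$ over $V_{\lambda+1}$ from $k$. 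For a general odd $\alpha=\mu+1$ the plan is the same: take as parameter a code of $j\restriction V_\mu$, obtained by fixing a standard definable injection of $V_\mu\times V_\mu$ into $V_\mu$ and encoding the function $j\restriction V_\mu$ as a subset of $V_\mu$, i.e.\ a member of $V_\alpha$. From this code the only thing left to recover is the action of $j$ on the rank-$\mu$ subsets of $V_\mu$, and the key claim is that this action is \emph{uniquely determined} by $j\restriction V_\mu$: an elementary self-embedding of $V_\mu$ whose critical sequence is cofinal in the limit $\lambda\le\mu$ has at most one elementary extension to $V_{\mu+1}$. Granting this, ``$j(y)$ is the unique set making the extension to $V_{\mu+1}$ elementary'' defines $j$ over $V_\alpha$ from the code. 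For limit $\mu$ this uniqueness is the union formula above; for an even successor $\mu$ it should follow from a finer continuity argument exploiting the $j$-fixed points at and above $\lambda$, and proving it is, I expect, the main obstacle. (At an odd base level the single-rank extension is by contrast non-unique, which is exactly what blocks the analogous definition there and makes the dichotomy sharp.)

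\emph{Even $\Rightarrow$ not definable.} Suppose $\alpha$ is even and $j$ is definable over $(V_\alpha,\in)$ from $p\in V_\alpha$ by a $\Sigma_m$ formula; I would argue by induction on $\alpha$. If there is a limit ordinal $\delta$ with $\kappa<\delta<\alpha$, $j(\delta)=\delta$, $p\in V_\delta$, and $V_\delta\prec_{\Sigma_m}V_\alpha$ --- and, when $\alpha$ is a limit, such $\delta$ are cofinal in $\alpha$, being the intersection of the cofinal classes of $j$-fixed limits and of $\Sigma_m$-reflection points of $V_\alpha$ --- then $j\restriction V_\delta$ is a nontrivial elementary self-embedding of $V_\delta$ definable over $(V_\delta,\in)$ from $p$, with $\delta$ even and $\delta<\alpha$, contradicting the inductive hypothesis. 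What remains are the levels with no such reflection: even successor levels, and limit levels of cofinality $\omega$ equal to the supremum of the critical sequence, where $V_\alpha$ is ``minimal'' and need not satisfy Replacement. On these I would fall back on a localization of Suzuki's theorem that no model of set theory admits a parameter-definable nontrivial elementary self-embedding; its self-referential argument should survive in this weak base because every member of $V_\alpha$ sits inside some $V_{\kappa_n}$, along which $j$-images of parameters can be tracked. Carrying Suzuki's argument into this setting, and disposing of the even successor case, are the delicate points here --- though in my estimation less of an obstacle than the extension-uniqueness claim of the odd direction.
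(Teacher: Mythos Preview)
The paper does not prove this theorem here; it is quoted from \cite{BergBerg} with a \(\qed\), and Section~3 only sketches the mechanism: one extends \(j:V_\epsilon\to V_\epsilon\) first to \(\mathcal H_\epsilon\) (the union of all transitive sets that are surjective images of some \(V_\gamma\), \(\gamma<\epsilon\)) and then defines the \emph{canonical extension} \(j^+(A)=\bigcup_{N\in\mathcal H_\epsilon} j(A\cap N)\); the inductive content is that for even \(\epsilon\), (1) every elementary \(i:V_\epsilon\to V_\epsilon\) is cofinal in \(\mathcal H_\epsilon\), and (2) every elementary \(j:V_{\epsilon+1}\to V_{\epsilon+1}\) equals \((j\restriction V_\epsilon)^+\). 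Item (2) is exactly the ``uniqueness of extension'' you isolate, and it immediately yields the odd direction since \(j^+\) is definable over \(V_{\epsilon+1}\) from a code for \(j\restriction V_\epsilon\).

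Your odd-direction plan is therefore on the right track, and your union formula at limit \(\mu\) is the special case \(\mathcal H_\mu=V_\mu\). The ``finer continuity argument'' you anticipate at even successor \(\mu\) is precisely the passage from \(V_\mu\) to \(\mathcal H_\mu\): the point is that every \(A\subseteq V_\mu\) is recovered from its traces \(A\cap N\) on transitive \(N\in\mathcal H_\mu\), and these traces live in the domain of (the extended) \(j\restriction\mathcal H_\mu\). One caution: your phrasing ``\(j(y)\) is the unique set making the extension elementary'' does not by itself give a first-order definition over \(V_{\mu+1}\), since it quantifies over class-sized extensions; the explicit \(j^+\) formula is what makes the definition go through, so you should aim for a formula rather than bare uniqueness.

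For the even direction your Suzuki-style plan is reasonable and is, as far as the present paper indicates, the intended route (the paper does not spell it out). Your reflection-and-descend argument at limits of uncountable cofinality is fine. At the delicate levels you flag, the version of Suzuki you want is: fix the defining formula \(\varphi\) and the degree of elementarity, and let \(\kappa_0\) be the least critical point of any \(\varphi\)-definable-from-some-parameter \(\Sigma_n\)-elementary self-map of \(V_\alpha\); then \(\kappa_0\) is parameter-free definable over \(V_\alpha\), so any such embedding fixes it, contradicting \(\kappa_0=\crit(j_0)\) for a witness \(j_0\). This needs only that \(V_\alpha\) can express ``\(\Sigma_n\)-elementary'' and ``\(\Sigma_m\)-defined by \(\varphi\) from a parameter'' for fixed \(n,m\), which it can; you do not need full Replacement in \(V_\alpha\), so the even-successor and \(\lambda\)-cofinal-limit cases are not the obstacle you fear. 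The genuinely hard step is the one you already identified on the odd side.
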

A notable distinction between this theorem and the main theorem of this paper is
that the periodic properties identified here make no reference to 
metamathematical notions like elementary embeddings
and definability nor to any large cardinal theoretic concepts
whatsoever.

The calculation of the Lindenbaum numbers requires the development of a 
good deal of machinery for choiceless cardinals,
which yields some other theorems. For example, recall that the \textit{Hartogs number}
of a set \(X\), denoted by \(\aleph(X)\), is the least ordinal
that admits no injection into \(X\).
\begin{repthm}{thm:hartogs}
    If \(\epsilon\) is even and there is an elementary embedding from \(V_{\epsilon+3}\) to itself,
    then \(\aleph(V_{\epsilon+2}) = \theta_{\epsilon+2}\).
\end{repthm}
In other words, there is no \(\theta_{\epsilon+2}\)-sequence of distinct subsets of \(V_{\epsilon+1}\).  

\begin{repthm}{thm:uf_bound}
    If there is an elementary embedding from the universe of sets to itself, then
    for some cardinal \(\kappa\), for all even \(\epsilon\geq \kappa\), if \(\eta < \theta_{\epsilon+2}\), then
    the set of \(\kappa\)-complete ultrafilters on \(\eta\)
    has cardinality less than \(\theta_{\epsilon+2}\).
\end{repthm}
These theorems are analogous to consequences of the determinacy of real games (\(\text{AD}_\mathbb R\)),
which implies that \(\aleph(V_{\omega+2}) = \Theta\) and
that for all \(\eta < \Theta\),
the set of ultrafilters on \(\eta\)
has cardinality less than \(\Theta\). 
The latter is a theorem of Kechris \cite{KechrisCut}, while the former
is part of the folklore, perhaps first observed by Solovay. Both these
theorems require the hypothesis \(\text{AD}_\mathbb R\) since
their conclusions are false in \(L(\mathbb R)\). Similarly, our theorems quoted above
are false in \(L(V_{\epsilon+1})\).
\section{Background: limit Lindenbaum numbers and AD}
The following lemma shows that for limit levels of the cumulative hierarchy,
our main theorem is a simple consequence of ZF. 
\begin{lma}\label{lma:limit}
    Suppose \(\gamma\) is a limit ordinal. Then \(\theta_{\gamma}\) is a strong limit cardinal and \(\theta_{\gamma+1} = \theta_\gamma^+\).
    \begin{proof}
        To see that \(\theta_\gamma\) is a strong limit cardinal, fix \(\eta < \theta_\gamma\).
        For some \(\alpha < \gamma\), \(\eta \leq^* V_\alpha\), and so \(P(\eta) \leq V_{\alpha+1}\),
        Therefore if \(\nu \leq^* P(\eta)\), then \(\nu \leq^* V_{\alpha+1}\), and hence \(\nu < \theta_\gamma\).

        To see that \(\theta_{\gamma+1} = \theta_\gamma^+\), suppose \(f : V_{\gamma}\to \nu\)
        is a surjection, and we will show that \(|\nu| \leq \theta_\gamma\). Note that
        \[\nu = \bigcup_{\alpha < \gamma}f[V_\alpha]\] 
        and it is easy to see that 
        \(|\nu| \leq \gamma\cdot \sup_{\alpha < \gamma} \ot(f[V_\alpha])\).
        Since \(f[V_\alpha]\) is a wellorderable set that is the surjective image of \(V_\alpha\),
        \(\ot(f[V_\alpha]) < \theta_\gamma\). Hence \(|\nu| \leq \gamma\cdot \theta_\gamma = \theta_\gamma\).
    \end{proof}
\end{lma}
If \(\epsilon\) is an even successor ordinal, we do not
have nearly enough slack to generalize 
the proof above that \(\theta_\gamma\) is a strong limit cardinal
to \(\theta_\epsilon\). Moreover, 
to prove \(\theta_{\epsilon+1} = \theta_\epsilon^+\) would
seem to require a hierarchy on \(V_{\epsilon}\) similar to the rank hierarchy on
\(V_\gamma\). Under the Axiom of Determinacy, such a hierarchy
naturally presents itself:
\begin{prp}[AD]\label{prp:AD}
    Let \(\Theta = \theta_{\omega+2}\). Then \(\theta_{\omega+3} = \Theta^+\).
    \begin{proof}
        For simplicity, assume the Axiom of Dependent Choice.
        Then by the Martin-Monk theorem \cite{VanWesep}, the subsets of \(V_{\omega+1}\)
        are arranged in a wellfounded hierarchy according to their position
        in the Wadge order of continuous reducibility. 
        For \(\alpha < \Theta\), let \(\Gamma_\alpha\) be the set of subsets of \(V_{\omega+1}\)
        of Wadge rank \(\alpha\). By Wadge's lemma, \(\Gamma_\alpha \leq^* V_{\omega+1}\): 
        indeed, for any \(A\subseteq V_{\omega+1}\)
        outside \(\Gamma_\alpha\),
        \(\Gamma_\alpha\) is contained in the set of continuous preimages of \(A\).
        
        Suppose \(\nu\leq^* V_{\omega+2}\) is an ordinal,
        and we will show \(\nu < \Theta^+\).
        Fixing a surjection \(f : V_{\omega+2}\to \nu\),
        \(\nu = \bigcup_{\alpha < \Theta}f[\Gamma_\alpha]\).
        Since \(f[\Gamma_\alpha]\leq^* V_{\omega+1}\),
        \(\ot(f[\Gamma_\alpha]) < \Theta\).
        It follows that \(|\nu| \leq \Theta\), so \(\nu < \Theta^+\).

        To prove the proposition without Dependent Choice, for each \(A\subseteq V_{\omega+1}\), 
        let \(\delta(A)\) be the supremum of the lengths all wellfounded
        relations on \(V_{\omega+1}\)
        that are continuously reducible to \(A\), and let
        \(\Lambda_\alpha\) be the set of all \(A\subseteq V_{\omega+1}\)
        with \(\delta(A) \leq \alpha\).
        As above, Wadge's lemma implies that for \(\alpha < \Theta\), 
        \(\Lambda_\alpha\leq^* V_{\omega+1}\). The argument of the previous
        paragraph can be pushed through by replacing \(\Gamma_\alpha\)
        with \(\Lambda_\alpha\).
    \end{proof}
\end{prp}
\section{Periodicity for embeddings}\label{section:periodicity}
The proof in \cite{BergBerg} of \cref{thm:periodicity} 
rests on the natural attempt to define, for each ordinal \(\alpha\), 
the extension ``by continuity'' of a \(\Sigma_1\)-elementary embedding
\(j : V_\alpha \to V_\alpha\) to \(V_{\alpha+1}\).

First, let \(\mathcal H_\alpha\) be the union of all transitive sets \(M\)
such that \(M \leq^* V_\gamma\)
for some \(\gamma < \alpha\). 
The embedding \(j\) can be extended to act on \(\mathcal H_\alpha\) as follows.
Fix \(x\in \mathcal H_\alpha\), 
and let \(R\in V_\alpha\) code a wellfounded relation whose transitive collapse
is the transitive closure of \(\{x\}\). 
Then define \(j(x)\) to be the unique set \(y\) such that the transitive collapse of
\(j(R)\) is the transitive closure of \(\{y\}\).
This extends \(j\) to a well-defined \(\Sigma_0\)-elementary embedding \(j : \mathcal H_\alpha\to \mathcal H_\alpha\). 
If \(j\) is \(\Sigma_{n+1}\)-elementary 
on \(V_\alpha\),
then its extension to \(\mathcal H_\alpha\) is \(\Sigma_n\)-elementary.
Throughout this paper, when faced with an elementary embedding of \(V_\alpha\),
we will often make use of its extension to \(\mathcal H_\alpha\) without comment.

Second, we attempt to extend \(j\) to \(V_{\alpha+1}\).
If \(j : V_\alpha\to V_\alpha\) is a \(\Sigma_1\)-elementary embedding, 
the \textit{canonical extension} of \(j\)
is the map \(j^+ : V_{\alpha+1}\to V_{\alpha+1}\) defined by
\[j^+(A) = \bigcup_{N\in \mathcal H_\alpha} j(A\cap N)\]
It is not hard to show that the graph of \(j^+\) is definable over 
\(V_{\alpha+1}\) from (a code for) \(j\).
\cref{thm:periodicity} is proved by an induction that establishes:
\begin{thm}
    Suppose \(\epsilon\) is an even ordinal.
    \begin{enumerate}[(1)]
        \item If \(i : V_{\epsilon}\to V_{\epsilon}\) is elementary, 
        then \(i\) is cofinal: \(\mathcal H_{\epsilon} = \bigcup_{N\in \mathcal H_\epsilon} i(N)\).
        \item If \(j : V_{\epsilon+1}\to V_{\epsilon+1}\) is elementary,
    then \(j = (j\restriction V_{\epsilon})^+\).\qed
    \end{enumerate}
\end{thm}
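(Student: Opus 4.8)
The plan is to prove (1) and (2) together by induction on even ordinals \(\epsilon\): clause (2) at \(\epsilon\) will be derived from clause (1) at \(\epsilon\), and clause (1) at \(\epsilon\) from the inductive hypothesis at even ordinals below \(\epsilon\). Throughout I identify an elementary \(i\colon V_\epsilon\to V_\epsilon\) with its extension to \(\mathcal H_\epsilon\), which is fully elementary because \(i\) is, and I use freely that this extension restricts to \(i\) on \(V_\epsilon\), commutes with transitive closures and with the transitive collapse of a coded wellfounded relation, and carries any \(Z\) that \(\mathcal H_\epsilon\) defines absolutely from parameters to the set defined the same way from the images of those parameters. The base case \(\epsilon=\omega\) is immediate: \(\mathcal H_\omega=V_\omega\) is pointwise definable so \(i=\mathrm{id}\) and (1) is trivial, while (2) is the rigidity of \(V_{\omega+1}\), proved by the usual symmetric-difference argument (which in fact makes every \(V_{\omega+n}\) rigid, so the theorem has content only far above \(\omega\)).

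\emph{Clause (1) \(\Rightarrow\) clause (2) at \(\epsilon\).} Let \(j\colon V_{\epsilon+1}\to V_{\epsilon+1}\) be elementary. Since \(V_\epsilon\) is the \(\subseteq\)-largest member of \(V_{\epsilon+1}\) it is \(j\)-fixed, so \(i:=j\restriction V_\epsilon\colon V_\epsilon\to V_\epsilon\) is elementary; I claim \(j=j^+:=(j\restriction V_\epsilon)^+\). For \(A\in V_{\epsilon+1}\) and transitive \(N\in\mathcal H_\epsilon\) the set \(A\cap N\) lies in \(\mathcal H_\epsilon\cap V_{\epsilon+1}\), where the extension of \(i\) agrees with \(j\) (both are read off the collapse of \(i(R)\) for a coding relation \(R\in V_\epsilon\)); as \(A\cap N\subseteq A\) this yields \(i(A\cap N)\subseteq j(A)\), hence \(j^+(A)\subseteq j(A)\) and likewise \(j^+(V_\epsilon\setminus A)\subseteq V_\epsilon\setminus j(A)\). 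The sets \(j^+(A)\) and \(j^+(V_\epsilon\setminus A)\) are disjoint: given transitive \(N_1,N_2\) replace them by \(N=N_1\cup N_2\) and use that \(A\cap N\) and \((V_\epsilon\setminus A)\cap N\) are disjoint. Finally they cover \(V_\epsilon\): for \(x\in V_\epsilon\), clause (1) supplies a transitive \(N\in\mathcal H_\epsilon\) with \(x\in i(N)\); partitioning \(N\) as \((A\cap N)\sqcup((V_\epsilon\setminus A)\cap N)\sqcup\{z\in N:\mathrm{rank}(z)\ge\epsilon\}\) (all three pieces in \(\mathcal H_\epsilon\)) and applying \(i\), the image of the last piece consists of sets of rank \(\ge\epsilon\) — since every ordinal below \(\epsilon\) lies in \(\mathcal H_\epsilon\) and \(i\) does not decrease ordinals — and hence omits \(x\). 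Thus the partition \(V_\epsilon=j^+(A)\sqcup j^+(V_\epsilon\setminus A)\) refines \(V_\epsilon=j(A)\sqcup(V_\epsilon\setminus j(A))\) compatibly, which forces \(j^+(A)=j(A)\).

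\emph{Clause (1).} When \(\epsilon\) is a limit ordinal this is direct, parallel to \cref{lma:limit}: for \(\gamma<\epsilon\) the transitive set \(\mathcal H_{\gamma+1}\) is a member of \(\mathcal H_\epsilon\) — via codes of wellfounded relations on \(V_\gamma\) it is a surjective image of some \(V_{\gamma'}\), \(\gamma'<\epsilon\) — and its definition is absolute in \(\mathcal H_\epsilon\), so \(i(\mathcal H_{\gamma+1})=\mathcal H_{i(\gamma)+1}\supseteq\mathcal H_{\gamma+1}\) as \(i(\gamma)\ge\gamma\); since \(\mathcal H_\epsilon=\bigcup_{\gamma<\epsilon}\mathcal H_{\gamma+1}\), every element of \(\mathcal H_\epsilon\) is captured (no inductive hypothesis is needed here). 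When \(\epsilon=\delta+2\) with \(\delta\) even, \(i(\delta+1)\in[\delta+1,\delta+2)\) forces \(i(\delta+1)=\delta+1\), so \(i\) fixes \(V_{\delta+1}\) setwise and \(i\restriction V_{\delta+1}\colon V_{\delta+1}\to V_{\delta+1}\) is elementary; by the inductive hypothesis \(i\restriction V_{\delta+1}=(i\restriction V_\delta)^+\) and the extension of \(i\restriction V_\delta\) is cofinal in \(\mathcal H_\delta\). What remains is to lift this cofinality two levels, to cofinality of \(i\) in \(\mathcal H_{\delta+2}\).

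\emph{The main obstacle} is precisely this lift. A new element \(x\in\mathcal H_{\delta+2}\) sits inside some transitive \(M\leq^* V_{\delta+1}=P(V_\delta)\), and one must produce a transitive \(N\in\mathcal H_{\delta+2}\) with \(x\in i(N)\). Two naive attempts fail: feeding a coding relation for \(\mathrm{trcl}(\{x\})\) through \(i\) only computes \(i(x)\), never a set \emph{containing} \(x\); and, unlike the limit case, \(\mathcal H_{\delta+1}\) need not belong to \(\mathcal H_{\delta+2}\), so it cannot serve as \(N\) and there is no slack. The resolution must exploit the identity \(i\restriction V_{\delta+1}=(i\restriction V_\delta)^+\), which makes the action of \(i\) on \(P(V_\delta)\) — and thus on surjective images of \(V_{\delta+1}\) — a function of its action on \(\mathcal H_\delta\), cofinal by the inductive hypothesis. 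Concretely I would stratify \(P(V_{\delta+1})\) by a Wadge-type rank, as \(\Lambda_\alpha\) stratifies \(P(V_{\omega+1})\) in the proof of \cref{prp:AD}, and show that each level of the stratification, together with the part of \(M\) it controls, is a surjective image of \(V_{\delta+1}\) that \(i\) expands; this is where the companion results stated before \cref{thm:strong_suzuki} enter and make such a stratification available. With that in hand the two clauses follow as above and the induction closes.
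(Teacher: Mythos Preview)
The paper does not prove this theorem; it is quoted from \cite{BergBerg} with a \qed, and the only information given is that the proof is ``by an induction.'' So I am evaluating your attempt on its own terms and against that one-line description.

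Your inductive scaffolding --- prove (1) and (2) together, deriving (2) at \(\epsilon\) from (1) at \(\epsilon\), and (1) at \(\epsilon\) from the hypothesis at smaller even ordinals --- matches the paper's description. Your argument for (1)\(\Rightarrow\)(2) is correct (though a bit heavier than necessary: one can simply observe that \(j^+\) is finitely additive and that \(j^+(V_\epsilon)=V_\epsilon\) via the witnesses \(N=V_\gamma\), which avoids the three-piece partition and the question of whether \(i(N\setminus V_\epsilon)\) meets \(V_\epsilon\)). Your treatment of the limit case of (1) is also fine.

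The genuine gap is exactly where you flag it: the successor case \(\epsilon=\delta+2\) of clause (1). You correctly diagnose why the two naive approaches fail, and you propose to resolve it by stratifying \(P(V_{\delta+1})\) with a Wadge-type rank analogous to the \(\Lambda_\alpha\) of \cref{prp:AD}. But you never construct such a stratification, and the ``companion results stated before \cref{thm:strong_suzuki}'' that you invoke do not supply one: those results are the setup of \(\mathcal H_\alpha\), the canonical extension, \cref{lma:super_surj}, and the present theorem itself --- none of which yields a wellfounded hierarchy on \(V_{\delta+2}\) with small levels. In the determinacy setting the stratification rests on Wadge's lemma, which in turn rests on the determinacy of Wadge games; there is no analogue of that available from an elementary \(j:V_{\delta+2}\to V_{\delta+2}\) alone. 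So the decisive step --- given \(x\in\mathcal H_{\delta+2}\), producing \(N\in\mathcal H_{\delta+2}\) with \(x\in i(N)\) --- remains unproved in your write-up, and the induction does not close. The actual argument in \cite{BergBerg} supplies this step by a mechanism specific to the choiceless setting; it is not a matter of quoting facts already in the present paper.
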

We require a strong form of undefinability for elementary embeddings 
of even levels of the cumulative hierarchy (\cref{thm:strong_suzuki}), 
which can be viewed as a converse
to the following lemma:
\begin{lma}\label{lma:super_surj}
    Suppose \(j : M\to N\) is an elementary embedding
    between two transitive structures. Suppose \(X,Y\in M\)
    and \(M\) satisfies \(X\leq^* Y\). If \(j[Y]\in N\),
    then \(j[X]\) is definable over \(N\) from \(j[Y]\) and parameters
    in \(j[M]\).\qed
\end{lma}
In particular, for all \(\eta \leq^* Y\), \(j[\eta]\)
is definable over \(N\) from \(j[Y]\) and parameters in \(j[M]\).
Our strong undefinability theorem is as follows:
\begin{thm}\label{thm:strong_suzuki}
    If \(\epsilon\leq \alpha\) are ordinals, \(\epsilon\) is even, and \(j : V_\alpha\to V_\alpha\)
    is elementary, then for all \(\gamma < \epsilon\), 
    \(j[\theta_\epsilon]\) is not definable over
    \(\mathcal H_\alpha\) from \(j[V_\gamma]\)
    and parameters in \(j[\mathcal H_\alpha]\).
\end{thm}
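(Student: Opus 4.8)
The plan is to argue by contradiction, converting a purported definition of \(j[\theta_\epsilon]\) into a surjection from \(V_\gamma\) onto \(\theta_\epsilon\). This is impossible: since \(\aleph^*(V_\gamma) = \theta_{\gamma+1} \leq \theta_\epsilon\), no ordinal below \(\aleph^*(V_\gamma)\) — in particular not \(\theta_\epsilon\) itself — is \(\leq^* V_\gamma\). Viewed this way the theorem is exactly the converse to \cref{lma:super_surj} promised just before it: that lemma turns a surjection \(V_\gamma \to X\) into a definition of \(j[X]\) from \(j[V_\gamma]\), and here we must run the implication backwards for \(X = \theta_\epsilon\).

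I would begin with two reductions. First, since \(V_\gamma \leq^* V_{\gamma'}\) whenever \(\gamma < \gamma' < \epsilon\), \cref{lma:super_surj} makes \(j[V_\gamma]\) definable over \(\mathcal H_\alpha\) from \(j[V_{\gamma'}]\) and a parameter in \(j[\mathcal H_\alpha]\); hence the property ``\(j[\theta_\epsilon]\) is definable over \(\mathcal H_\alpha\) from \(j[V_\gamma]\) and parameters in \(j[\mathcal H_\alpha]\)'' is monotone in \(\gamma\), so it suffices to refute it assuming it holds throughout a final segment of \(\epsilon\). Second, \(\mathcal H_\alpha\) can compute order types of its subsets of ordinals, so from any such definition of \(j[\theta_\epsilon]\) one definably recovers the increasing enumeration of \(j[\theta_\epsilon]\); as \(j\) is strictly increasing on ordinals this enumeration is \(j \restriction \theta_\epsilon\), so \(j \restriction \theta_\epsilon\) is itself definable over \(\mathcal H_\alpha\) from \(j[V_\gamma]\) and parameters in \(j[\mathcal H_\alpha]\). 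When \(\epsilon\) is a limit the extension of \(j\) to \(\mathcal H_\epsilon\) is cofinal by part~(1) of the preceding theorem, so moreover \(\theta_\epsilon = \sup j[\theta_\epsilon]\); this is the first point where the evenness of \(\epsilon\) is used. (One may also assume the critical point of \(j\) is at most \(\theta_\epsilon\), since otherwise \(j[\theta_\epsilon] = \theta_\epsilon\) and the statement is not the intended one.)

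The core of the proof must exploit the evenness of \(\epsilon\) a second, decisive time. One route is to show that, once \(j \restriction \theta_\epsilon\) is available, the definability spreads upward: combining \cref{lma:super_surj} with the definability of the canonical extension, one propagates ``\(j[V_\beta]\) is definable over \(\mathcal H_\alpha\) from \(j[V_\gamma]\) and parameters in \(j[\mathcal H_\alpha]\)'' through the levels \(\beta < \epsilon\), invoking at each even level that elementary embeddings of \(V_{\beta+1}\) are canonical extensions (part~(2) of the preceding theorem) and at limit stages the cofinality supplied by part~(1). Pushed to its conclusion this would make \(j \restriction V_\epsilon\) definable over \(V_\epsilon\) from parameters, contradicting \cref{thm:periodicity} since \(\epsilon\) is even. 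More in the spirit of the ``converse'' framing, the alternative is to confine the descended definition of \(j \restriction \theta_\epsilon\) to a transitive substructure \(M \leq^* V_\gamma\) of \(\mathcal H_\alpha\), so that it becomes a genuine partial surjection from \(V_\gamma\) onto \(\theta_\epsilon\) — the absurdity of the first paragraph.

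The step I expect to be the real obstacle is this confinement/propagation: \emph{a priori} the defining formula can quantify over subsets of \(V_\gamma\), or over ordinals below \(\theta_\epsilon\), whose complexity \(V_\gamma\) does not control, and \(\mathcal H_\alpha\) is far too large for a crude boundedness argument. This is precisely the behaviour that fails at odd levels — there \(j\) on \(V_{\epsilon-1}\) is recovered ``by continuity'' and embeddings of \(V_\epsilon\) are definable (\cref{thm:periodicity}) — so the argument has to be arranged so that the cofinality of \(j\) on \(\mathcal H_\epsilon\), available only when \(\epsilon\) is even, is what does the confining. The successor even case, where (as noted after \cref{lma:limit}) there is no slack for a rank-hierarchy argument, will additionally require equipping \(V_{\epsilon-1}\) with a wellfounded rank-like hierarchy in the role played by the Wadge hierarchy in \cref{prp:AD}, so that \(j \restriction V_{\epsilon-1}\) can be reconstructed rank-by-rank from \(j \restriction \theta_\epsilon\) and \(j[V_\gamma]\).
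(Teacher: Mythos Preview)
Your high-level target is right: from a definition of \(j[\theta_\epsilon]\) over \(\mathcal H_\alpha\) in terms of \(j[V_\gamma]\) and a parameter \(j(p)\), one must manufacture a surjection from a set of size \(\leq^* V_{\gamma+1}\) onto \(\theta_\epsilon\). But neither of your two proposed routes gets there, and the paper's actual mechanism is one you have not identified.

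Your first route (propagate definability up through the levels \(V_\beta\), \(\beta<\epsilon\), and contradict \cref{thm:periodicity}) does not work as stated. \cref{lma:super_surj} lets you pass from \(j[Y]\) to \(j[X]\) when \(X\leq^* Y\); it does not let you climb from \(j[V_\beta]\) to \(j[V_{\beta+1}]\), and the canonical-extension fact goes the wrong way for this purpose. Even if you could propagate, you would end with definability over \(\mathcal H_\alpha\), not over \(V_\epsilon\), so \cref{thm:periodicity} would not apply directly. Your second route (``confine to a transitive \(M\leq^* V_\gamma\)'') is the right instinct, but you have no mechanism for the confinement, and your suggestion of a Wadge-style hierarchy on \(V_{\epsilon-1}\) is not how the paper proceeds; no such hierarchy is available here.

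The missing idea is ultrafilter-theoretic. One derives the ultrafilter \(U\) on \(V_{\gamma+1}\) from \(j\) using the seed \(j[V_\gamma]\), and observes that the defining formula \(\varphi\) yields a function \(f\) on \(V_{\gamma+1}\), namely \(f(x)=\{\xi:\mathcal H_\alpha\vDash\varphi(\xi,x,p)\}\), with \([f]_U=j_U[\theta_\epsilon]\) in \(\Ult(\mathcal H_\alpha,U)\). The real work is then the independent combinatorial fact (\cref{UltrafilterUndefinabilityThm}): for any ultrafilter \(U\) on a set \(X\) with \(X\times X\leq^* X\), if \(j_U[\eta]\in M_U\) then \(\eta\leq^* X\). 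That theorem is proved by passing to \(\HOD_Y\) for a suitable parameter \(Y\), invoking Vop\v{e}nka's lemma to see \(\HOD_{Y,x}\) is a \(\delta\)-cc extension of \(\HOD_Y\) for \(\delta=\aleph^*_Y(X)\), and running a Solovay-style stationary-partition argument inside an \(\OD_Y\)-ultraproduct to derive a contradiction from \(\eta\geq\delta\). None of your discussion of cofinality of \(j\) on \(\mathcal H_\epsilon\), the even/odd dichotomy for canonical extensions, or rank-by-rank reconstruction plays any role in this step; the evenness of \(\epsilon\) enters only in the trivial preliminary reduction from \(\gamma=\epsilon-1\) to \(\gamma+1<\epsilon\) (via \cref{thm:periodicity}), so that \(X=V_{\gamma+1}\) satisfies \(X\times X\leq^* X\) and \(\theta_\epsilon\not\leq^* X\).
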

The rough idea behind our analysis of the
even Lindenbaum numbers (\cref{thm:even}) is to show that 
given an even ordinal \(\epsilon\) and an elementary embedding
\(j : V_{\epsilon+2}\to V_{\epsilon+2}\),
for \(\eta < \theta_{\epsilon+2}\),
\(j[P(\eta)]\) \textit{is} definable over 
\(\mathcal H_{\epsilon+2}\) from \(j[V_{\epsilon+1}]\)
and parameters in \(j[\mathcal H_{\epsilon+2}]\), 
and therefore 
\(\theta_{\epsilon+2}\nleq^* P(\eta)\): otherwise
by \cref{lma:super_surj}, \(j[\theta_{\epsilon+2}]\)
would be definable over 
\(\mathcal H_{\epsilon+2}\) from \(j[V_{\epsilon+1}]\)
and parameters in \(j[\mathcal H_{\epsilon+2}]\), contrary to
\cref{thm:strong_suzuki}.

\cref{thm:strong_suzuki} follows from a general ultrafilter-theoretic fact:
\begin{thm}\label{UltrafilterUndefinabilityThm}
    Suppose \(U\) is an ultrafilter on a set \(X\)
    such that \(X\times X\leq^* X\) 
    and \(\kappa = \crit(j_U)\). 
    Then for any ordinal \(\eta\), if \(j_U[\eta]\in M_U\), then \(\eta \leq^{*} X\).
\end{thm}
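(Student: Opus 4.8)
The plan is to work with the seed $a=[\mathrm{id}_X]_U\in j_U(X)$ and to reduce the problem to bounding $\aleph^*(X)$. Since the theorem asserts exactly that $\eta\leq^* X$ whenever $j_U[\eta]\in M_U$, and since $j_U[\delta]=j_U[\eta]\cap j_U(\delta)\in M_U$ for $\delta\le\eta$, it is enough to show that $j_U[\Theta^*]\notin M_U$, where $\Theta^*:=\aleph^*(X)$. So assume toward a contradiction that $s:=j_U[\Theta^*]\in M_U$. First note that $\kappa<\Theta^*$: since $j_U(\kappa)>\kappa$ there is $g\colon X\to\kappa$ with $[g]_U=\kappa$, whence $\{x:g(x)>\xi\}\in U$ for every $\xi<\kappa$, so $\mathrm{ran}(g)$ is a cofinal subset of $\kappa$ that is $\leq^* X$ via $g$; since a critical point is regular, this forces $\kappa\leq^* X$, i.e.\ $\kappa<\Theta^*$.

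Now fix $f\colon X\to V$ with $[f]_U=s$. Since $s$ is a set of ordinals and $s=j_U[\Theta^*]\subseteq j_U(\Theta^*)$, we may assume (by modifying $f$ off a set not in $U$) that each $f(x)$ is a set of ordinals with $f(x)\subseteq\Theta^*$. Because $s$ is the pointwise image of $\Theta^*$, routine ultrapower computations give $\Theta^*=\bigcup_{x\in X}f(x)$ and $[x\mapsto\ot(f(x))]_U=\ot(s)=\Theta^*$. The main engine is the observation that we are done as soon as $\mu:=\sup_{x\in X}\ot(f(x))<\Theta^*$. For then each $f(x)$ order-embeds \emph{canonically} into $\mu$ (via its transitive collapse), so $\coprod_{x\in X}f(x)$ injects into $X\times\mu$; since $\mu<\Theta^*=\aleph^*(X)$ we have $\mu\leq^* X$, and hence, using $X\times X\leq^* X$,
\[\Theta^*=\bigcup_{x}f(x)\ \leq^*\ \coprod_{x}f(x)\ \leq^*\ X\times\mu\ \leq^*\ X\times X\ \leq^*\ X,\]
contradicting $\Theta^*=\aleph^*(X)$. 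Moreover $f$ may be replaced by any representative of $s$, and truncating the $f(x)$'s shows that $\mu<\Theta^*$ can be arranged whenever $j_U(\nu)\ge\Theta^*$ for some $\nu<\Theta^*$. So we may assume $j_U(\nu)<\Theta^*$ for all $\nu<\Theta^*$; equivalently, $s=j_U[\Theta^*]\subseteq\Theta^*$ is a cofinal subset of order type $\Theta^*$ that belongs to $M_U$.

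This last configuration is the heart of the matter, and where I expect the real work. One must show that $M_U$ cannot contain the pointwise image $j_U[\Theta^*]$ once it is cofinal in $\Theta^*$; cofinality bookkeeping alone will not suffice, so the argument has to use both that $s$ is a $j_U$-image living in $M_U$ and that $\kappa=\crit(j_U)<\Theta^*$. The configuration is rigid --- $j_U$ commutes with successor, so the gaps of $s$ occur only at limit ordinals of small cofinality, and $s$ must omit the entire interval $[\kappa,j_U(\kappa))$ --- and the plan is to exploit this together with a stratified form of the decomposition above (partition $X$ suitably, bound the union of the $f(x)$ over each piece, and iterate), using the $\kappa$-completeness of $U$ to force the descent to terminate. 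Making this final step close is the crux of the proof.
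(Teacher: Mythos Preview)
Your reduction is clean: it suffices to show \(j_U[\Theta^*]\notin M_U\) for \(\Theta^*=\aleph^*(X)\), and your ``easy case'' (\(\mu=\sup_x\ot(f(x))<\Theta^*\), equivalently \(j_U\) discontinuous at \(\Theta^*\)) is handled correctly. But the proposal is incomplete: you explicitly leave the hard case---where \(j_U[\Theta^*]\subseteq\Theta^*\) is cofinal---unproved, and your sketched plan (``partition \(X\), bound the \(f(x)\)'s over each piece, iterate, use \(\kappa\)-completeness'') is not a proof. Nothing in that outline explains how the single structural fact available to you, namely \(\kappa\notin j_U[\Theta^*]\), produces a contradiction; cofinality bookkeeping and iterated decompositions of \(X\) do not obviously terminate or yield anything new, since in the hard case every representative \(f\) has \(\sup_x\ot(f(x))=\Theta^*\) and there is no smaller target to descend to.

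The paper's argument is genuinely different and shows why an elementary approach runs into trouble. It passes to \(\HOD_Y\) for a parameter \(Y\) coding \(f\) and the pairing surjection, and works with the \emph{definable} Lindenbaum number \(\delta=\aleph^*_Y(X)\). Vop\v{e}nka's lemma makes each \(\HOD_{Y,x}\) a \(\delta\)-cc generic extension of \(\HOD_Y\), so stationarity in \(P(\delta)\cap\HOD_Y\) is preserved to the restricted ultraproduct \(N=\prod_x\HOD_{Y,x}/U\). One then shows \(i(\delta)=\delta\) for the induced embedding \(i\), fixes a partition of \((S^\delta_\omega)^{\HOD_Y}\) into \(\delta\) stationary pieces \(\langle S_\alpha\rangle\), and derives a contradiction from the fact that \(i[\delta]\) is an \(\omega\)-club in \(N\) meeting \(T_\kappa=i(\vec S)_\kappa\), while any point of \(i[\delta]\) lies in some \(T_{i(\alpha)}\) with \(i(\alpha)\neq\kappa\). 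The move to \(\HOD_Y\) is not cosmetic: in bare ZF one cannot in general split \(S^\delta_\omega\) into \(\delta\) stationary sets, so some inner model with choice is needed to run the Solovay--Kunen style argument. Your purely combinatorial plan supplies no substitute for this step.
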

The rough idea behind our analysis of the odd Lindenbaum numbers is to 
show that given an even ordinal \(\epsilon\) and an elementary embedding
\(j : V_{\epsilon+3}\to V_{\epsilon+3}\) 
\(j[\theta_{\epsilon+3}]\) is definable over
\(\mathcal H_{\epsilon+3}\) from \(j[\theta_{\epsilon+2}]\);
letting \(U\) be the ultrafilter on \(P(\theta_{\epsilon+2})\)
derived from \(j\) using \(j[\theta_{\epsilon+2}]\),
one then has that \(j_U[\theta_{\epsilon+3}]\in M_U\),
and so by \cref{UltrafilterUndefinabilityThm},
\(\theta_{\epsilon+3}\leq^* P(\theta_{\epsilon+2})\).

Granting \cref{UltrafilterUndefinabilityThm}, we now prove \cref{thm:strong_suzuki}.
\begin{proof}[Proof of \cref{thm:strong_suzuki}]
    Assume towards a contradiction that 
    for some \(\gamma < \epsilon\) and \(p\in \mathcal H_\alpha\), 
    \(j[\theta_\epsilon]\) is definable over
    \(\mathcal H_\alpha\) from \(j[V_\gamma]\) and \(j(p)\).
    By \cref{thm:periodicity}, if \(\epsilon = \gamma+1\),
    then \(j[V_\gamma]\) is definable over
    \(\mathcal H_\alpha\) from \(j[V_{\gamma-1}]\),
    and so we may assume \(\gamma+1 < \epsilon\) by replacing
    \(\gamma\) by \(\gamma-1\) if necessary.

    Let \(U\) be the ultrafilter on \(V_{\gamma+1}\) derived from 
    \(j\) using \(j[V_\gamma]\),
    and let
    \(k : \Ult(\mathcal H_\alpha,U)\to \mathcal H_\alpha\) be the factor embedding
    given by \(k([f]_U) = j(f)(j[V_\gamma])\).
    Fix a formula \(\varphi\) such that
    \[j[\theta_{\epsilon}] = \{\xi < \theta_\alpha : \mathcal H_\alpha\vDash \varphi(\xi,j[V_\gamma],j(p))\}\]
    For each \(x\in V_{\gamma+1}\), let \(f(x) = \{\xi < \theta_\alpha : \mathcal H_\alpha\vDash \varphi(\xi,x,p)\}\),
    so that 
    \[[f]_U = k^{-1}[j[\theta_{\epsilon}]] = j_U[\theta_\epsilon]\]
    But then by \cref{UltrafilterUndefinabilityThm}, \(\theta_\epsilon\leq^* V_{\gamma+1}\),
    contrary to the fact that \(\gamma+1 < \epsilon\).
\end{proof}

Though \cref{UltrafilterUndefinabilityThm} itself is purely combinatorial,  
its proof uses the techniques of ordinal definability and forcing.
Suppose \(Y\) is a set and \(X\) is ordinal definable from \(Y\).
Let \(\aleph^*_Y(X)\) denote the least ordinal
\(\delta\) such that there is no surjection
from \(X\) to \(\delta\) that is definable from \(Y\)
and ordinal parameters.
\begin{lma}[Vopenka]\label{lma:vopenka}
    Suppose \(Y\) is a set and \(X\) is ordinal definable from \(Y\).
    Then for any \(x\in X\),
    \(\HOD_{Y,x}\) is a \(\aleph^*_Y(X)\)-cc generic extension of
    \(\HOD_Y\).\qed
\end{lma}
We will only use \cref{lma:vopenka} to conclude that \(\aleph^*_Y(X)\) is regular in 
\(\HOD_{Y,x}\) for all \(x\in X\) and every stationary subset of
\(\aleph^*_Y(X)\) in \(\HOD_{Y}\) remains stationary in \(\HOD_{Y,x}\).
Both these properties are easy to verify combinatorially
assuming \(\aleph^*_Y(X)\) is regular in \(\HOD_Y\).
The \(\HOD_Y\)-regularity of \(\aleph^*_Y(X)\) seems to require some assumption on \(Y\);
we will assume that there is an \(\OD_Y\) surjection from \(X\) onto \(X\times X\).
\begin{proof}[Proof of \cref{UltrafilterUndefinabilityThm}]
        Fix a function \(f\) on \(X\) such that
        \([f]_U = j_U[\eta]\) and a surjection \(p : X\to X\times X\), and let \(Y\)
        be a set from which \(f\) and \(p\) are ordinal definable. 
        Let \(\delta = \aleph^*_{Y}(X)\).
        Then \(\delta\) is regular in \(M = \HOD_{Y}\).
        Assume towards a contradiction that \(\eta \geq \delta\). 

        For each \(x\in X\), let 
        \(M_x = \HOD_{Y,x}\), so that \(M_x\) is a \(\delta\)-cc generic extension of
        \(M\) for all \(x\in X\). Let \(N = \prod_{x\in X}M_x/U\) be the ultraproduct 
        formed using only
        those functions on \(X\) that are ordinal definable from \(Y\).
        For each such function \(g\), let \([g]\) denote the element of \(N\) it represents.
        Then this ultraproduct satisfies \L o\'s's theorem in the sense that
        \(N\vDash \varphi([g])\) if and only if \(M_x\vDash \varphi(g(x))\) for \(U\)-almost all \(x\in X\).

        Since \(M\subseteq M_x\) for all \(x\in X\), 
        we can define a function \(i\) on \(M\)
        by \(i(B) = [c_B]\) where \(c_B : X\to \{B\}\) 
        is the constant function. Then \(i\) is 
        an elementary embedding from \(M\) to 
        \[H = \{a\in N : \exists B\in M\, a\in i(B)\}\]
        
        Identifying the wellfounded part of \(N\) with its transitive collapse,
        \(\delta+1\subseteq N\) since
        \(i[\delta] = [f]\cap i(\delta)\) belongs to \(N\). 
        Clearly \(\delta\) is regular in \(M_x\) for all \(x\in X\),
        and so \(i(\delta)\) is regular in \(N\). 
        Moreover \(i(\delta) = \sup i[\delta]\)
        since there is no \(\OD_{Y}\) function from \(X\) to an unbounded subset of \(\delta\).
        Since \(i[\delta]\) belongs to \(N\) and is an unbounded subset of the \(N\)-regular cardinal \(i(\delta)\),
        we must have \(\ot(i[\delta]) = i(\delta)\). In other words, \(i(\delta) = \delta\).

        Working in \(M\), fix a partition \(\mathcal S = \langle S_\alpha : \alpha < \delta\rangle\)
        of \((S^\delta_\omega)^M\) into stationary sets.
        Let \(\mathcal T = i(\mathcal S)\), so \(\mathcal T = \langle T_\alpha : \alpha < \delta\rangle\)
        is a partition of \((S^\delta_\omega)^H\) into \(H\)-stationary sets.
        For each \(x\in X\), any set in \(P(\delta)\cap M\) that is stationary in \(M\)
        remains stationary in \(M_x\) by \cref{lma:vopenka}, and so by \L o\'s's theorem, any set in
        \(P(\delta)\cap H\) that is stationary in \(H\) remains stationary in \(N\). 
        Since \(i\) is continuous at
        ordinals of cofinality \(\omega\), the set \(i[\delta]\) is 
        an \(\omega\)-closed unbounded set in \(N\).
        It follows that \(T_\kappa \cap i[\delta]\neq \emptyset\) where \(\kappa = \crit(j_U) = \crit(i)\). 
        But if \(i(\xi)\in T_\kappa\), then \(\xi\in S_\alpha\) for some \(\alpha < \delta\),
        and hence \(\xi\in T_{i(\alpha)}\), which contradicts that \(T_\kappa\) and \(T_{i(\alpha)}\) are disjoint.
\end{proof}

\section{The coding lemma}
In this section, we establish an analog of the Moschovakis coding lemma
\cite{Moschovakis}
under choiceless large cardinal hypotheses. The proof is a 
slight twist on an argument of
Woodin \cite{SEM2} establishing a similar property of
\(L(V_{\lambda+1})\) under \(I_0\) in the context of ZFC.

For ordinals \(\lambda \leq \epsilon\), let 
\(I(\lambda,\epsilon)\) abbreviate the statement that
for all \(\alpha < \lambda\) and \(A\subseteq V_{\epsilon+1}\),
for some \(A'\subseteq V_{\epsilon+1}\), there are
elementary \(j_0,j_1 : (V_{\epsilon+1},A')\to (V_{\epsilon+1},A)\)
whose distinct critical points lie between \(\alpha\) and \(\lambda\).
Admittedly, this is a somewhat contrived large cardinal hypothesis, but
it is useful because if \(\epsilon\) is even, 
\(I(\lambda,\epsilon)\) is downwards absolute to inner models containing \(V_{\epsilon+1}\)
and follows from the existence
of an elementary embedding \(j : V_{\epsilon+2}\to V_{\epsilon+2}\) with critical point \(\kappa\)
such that \(\lambda = \sup \{\kappa,j(\kappa),j(j(\kappa)),\dots\}\).
\begin{lma}
    If \(\lambda = \sup \{\kappa,j(\kappa),j(j(\kappa)),\dots\}\)
    for some elementary \(j : V_{\epsilon+2}\to V_{\epsilon+2}\)
    with critical point \(\kappa\),
    then \(I(\lambda,\epsilon)\) holds.
    \begin{proof}
        Let \(\alpha < \lambda\) be the least ordinal
        such that \(I(\lambda,\epsilon)\) fails for \(\alpha\)
        in the sense that there is some set \(A\subseteq V_{\epsilon+1}\)
        such that there is no \(A'\subseteq V_{\epsilon+1}\) admitting elementary embeddings 
        \(j_0,j_1 : (V_{\epsilon+1},A')\to (V_{\epsilon+1},A)\)
        whose distinct critical points lie between \(\alpha\) and \(\lambda\).
        Note that \(\alpha\) is definable over \(V_{\epsilon+2}\),
        and so \(j(\alpha) = \alpha\). Since \(\lambda\) is the least fixed point of 
        \(j\) above its critical point \(\kappa\), \(\kappa > \alpha\).
        
        Fix a set \(A\subseteq V_{\epsilon+1}\) such that \((\alpha,A)\) 
        witnesses the failure of \(I(\lambda,\epsilon)\).
        Let \(A_1 = j(A)\) and \(A_2 = j(j(A))\). By the elementarity of \(j\circ j\),
        \((\alpha,A_2)\) also witnesses the failure of \(I(\lambda,\epsilon)\).
        Since \(j_0 = j\restriction V_{\epsilon+1}\) is an elementary embedding
        from \((V_{\epsilon+1},A)\) to \((V_{\epsilon+1},A_1)\), the elementarity of
        \(j\) implies that
        \(j_1 = j(j_0)\) is an elementary embedding from 
        \((V_{\epsilon+1},A_1)\) to \((V_{\epsilon+1},A_2)\).
        But note that \(j_0\) is also an elementary embedding from 
        \((V_{\epsilon+1},A_1)\) to \((V_{\epsilon+1},A_2)\) since \(j(A_1) = A_2\).
        We have \(\alpha < \crit(j_0) = \kappa < j(\kappa) = \crit(j_1) < \lambda\). 
        Therefore \(j_0\) and \(j_1\) witness \(I(\lambda,\epsilon)\) for
        \((\alpha,A_2)\), which contradicts that 
        \((\alpha,A_2)\) witnesses the failure of \(I(\lambda,\epsilon)\).
    \end{proof}
\end{lma}

Throughout this section, we fix an even ordinal \(\epsilon\). 
We will establish a (slightly technical) general result with the following
consequence:
\begin{thm}\label{thm:hod_cor}
    Fix a class \(A\)
    and let \(M = L(V_{\epsilon+1})[A]\) or \(M = \HOD_{V_{\epsilon+1},A}\).
    If \(M\) satisfies \(I(\lambda,\epsilon)\) for some \(\lambda \leq \epsilon\),
    then for all \(\eta < \theta_{\epsilon+2}^M\), \(M\) satisfies
    \(P(\eta)\cap M\leq^* V_{\epsilon+1}\).
\end{thm}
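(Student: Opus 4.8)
The plan is to prove a coding lemma internal to the model $M$, following Woodin's argument for $L(V_{\lambda+1})$ under $I_0$ but carried out in the choiceless setting. First I would fix $\eta < \theta_{\epsilon+2}^M$ and a surjection $g : V_{\epsilon+1} \to \eta$ in $M$; I want to produce, for each $X \subseteq \eta$, a ``code'' — a set $A_X \subseteq V_{\epsilon+1}$ that decodes to $X$ via $g$ — in a uniform enough way that the map $X \mapsto A_X$ is (the inverse of) an $M$-definable surjection from $P(V_{\epsilon+1})^M$ onto $P(\eta)^M$, which would give $P(\eta)\cap M \leq^* V_{\epsilon+1}$ since $P(V_{\epsilon+1})^M \leq V_{\epsilon+2}$ — wait, that is not quite what we want. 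The right target is $P(\eta) \cap M \leq^* V_{\epsilon+1}$ directly, so the coding must land inside $P(V_{\epsilon+1})$ \emph{and} the decoding surjection must be from $V_{\epsilon+1}$ itself, not from its power set; so the real content is that a single subset of $V_{\epsilon+1}$, together with a real-like parameter ranging over $V_{\epsilon+1}$, suffices to name an arbitrary subset of $\eta$.

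The key steps, in order: (1) Set up the "good coding" relation: say $A \subseteq V_{\epsilon+1}$ is a \emph{code for} $X \subseteq \eta$ (relative to $g$) if the section structure of $A$ — for instance, $A$ viewed as coding, for each $x \in V_{\epsilon+1}$, a subset $A_x$ of something — lets one recover $X$ as $\{g(x) : x \in A,\ \text{some definable condition}\}$, and crucially that for a cofinal set of $\alpha < \lambda$ there is a code that is ``$\alpha$-honest'' in that it is correct below $\alpha$ and stable under the embeddings furnished by $I(\lambda,\epsilon)$. (2) Prove the coding lemma by contradiction on the least $\alpha < \lambda$ for which some $X \subseteq \eta$ has no code correct past $\alpha$; here is where $I(\lambda,\epsilon)$ enters — apply it to get $A' \subseteq V_{\epsilon+1}$ and two embeddings $j_0, j_1 : (V_{\epsilon+1},A') \to (V_{\epsilon+1},A)$ with distinct critical points between $\alpha$ and $\lambda$, where $A$ is a putative code for $X$ correct only below $\alpha$. (3) Use the two embeddings to amalgamate: the images $j_0[\text{code}]$ and $j_1[\text{code}]$ agree below $\alpha$ but their critical points give two different ways to push the coding past $\alpha$, and stitching them produces a single code correct strictly past $\alpha$, contradicting minimality. (4) Once the coding lemma holds, observe the decoding map: the surjection from $V_{\epsilon+1}$ onto $P(\eta)\cap M$ sends a code $A$ (itself coded by an element of $V_{\epsilon+1}$? — no: $A \in V_{\epsilon+2}$) — so here one must be careful: the lemma should be arranged so that the set of codes of a fixed $X$ is nonempty for \emph{every} $X$, and then by Wadge-style minimality the collection of all codes is $\leq^* V_{\epsilon+1}$ in the spirit of the AD argument in \cref{prp:AD}, with the $j_0,j_1$ playing the role Wadge reducibility plays under AD; one then composes the ``$\leq^* V_{\epsilon+1}$ enumeration of codes'' with the decoding map to get $P(\eta)\cap M \leq^* V_{\epsilon+1}$.

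I should also handle the two cases $M = L(V_{\epsilon+1})[A]$ and $M = \HOD_{V_{\epsilon+1},A}$ uniformly by noting that in either case $M$ has a definable-from-$V_{\epsilon+1}$-and-parameters surjection structure on its subsets of $V_{\epsilon+1}$ (a definable wellordering-like scaffolding of $P(V_{\epsilon+1})^M$ relative to $V_{\epsilon+1}$), which is exactly what lets the ``collection of all codes is $\leq^* V_{\epsilon+1}$'' step go through choicelessly; the downward absoluteness of $I(\lambda,\epsilon)$ for even $\epsilon$ (remarked before \cref{thm:hod_cor}) is what guarantees the hypothesis $I(\lambda,\epsilon)^M$ is available to run the argument inside $M$. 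The main obstacle I expect is step (3), the amalgamation: making precise a notion of "code correct below $\alpha$" that is simultaneously (a) weak enough that \emph{some} code exists trivially, (b) preserved well enough by the $j_i$ that their critical-point behavior genuinely extends correctness past $\alpha$, and (c) strong enough that a code correct for all $\alpha < \lambda$ actually decodes $X$ on all of $\eta$ — this balancing act is the technical heart, and it is where Woodin's $I_0$ argument has to be reworked, since here we cannot appeal to any wellordering of $P(V_{\epsilon+1})$ and must instead exploit the evenness of $\epsilon$ and the full strength of $I(\lambda,\epsilon)$ giving \emph{two} embeddings rather than one.
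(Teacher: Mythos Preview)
Your overall plan---prove a coding lemma inside \(M\) via a least-counterexample argument using the two embeddings supplied by \(I(\lambda,\epsilon)\)---is the paper's strategy, but you are missing the key definition, and your step~(3) describes the wrong mechanism. The paper's central concept is that of an \emph{elementary pointclass}: a family \(\Gamma\subseteq V_{\epsilon+2}\) with \(\Gamma\leq^* V_{\epsilon+1}\) that is closed under images \(j[\,\cdot\,]\) and preimages \(j^{-1}[\,\cdot\,]\) for all elementary \(j:V_\epsilon\to V_\epsilon\). The coding lemma asserts that a \emph{single} such \(\Gamma\) contains a code for every subset of \(\eta\); since \(\Gamma\leq^* V_{\epsilon+1}\) by definition, the surjection you want in step~(4) falls out immediately, and your Wadge-minimality worry there dissolves.

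As for step~(3): there is no ``stitching.'' Using \(\lambda\)-pointclass choice (which \(M\) satisfies because it has a wellordered family of elementary pointclasses whose union is \(V_{\epsilon+2}^M\)), one produces a \(\lambda\)-sequence \(\langle\Gamma_\alpha\rangle\) witnessing failure of the weak coding lemma, then reflects it via \(j_0,j_1\) to \(\langle\Gamma_\alpha'\rangle\). With \(\kappa=\crit(j_0)<\crit(j_1)\), take a bad relation \(R\in\Gamma_\kappa'\); then \(j_0(R)\in\Gamma_{j_0(\kappa)}\) has no good subrelation in \(\Gamma_\kappa\). But \(j_1(\kappa)=\kappa\), so \(j_1(R)\in\Gamma_\kappa\), hence \(R=j_1^{-1}[j_1(R)]\in\Gamma_\kappa\) by preimage-closure, hence \(j_0[R]\in\Gamma_\kappa\) by image-closure---and \(j_0[R]\) is a cofinal subrelation of \(j_0(R)\), a contradiction. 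The two embeddings act \emph{sequentially}, not in parallel: \(j_1\) exploits its fixed point \(\kappa\) to land \(R\) in \(\Gamma_\kappa\), then \(j_0\) pushes \(R\) forward as a subrelation of the bad \(j_0(R)\). Without the closure properties of an elementary pointclass this move is unavailable, and your amalgamation picture gives no substitute. You have also omitted a second stage: the argument above only yields the \emph{weak} coding lemma (codes with \(\sup\varphi[\dom S]=\eta\)); the full coding lemma needs a further induction on \(\eta\) using a local collection principle, which \(M\) again satisfies for the structural reasons you correctly identify.
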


\begin{defn}
    \begin{itemize}
        \item A set \(\Gamma\subseteq V_{\epsilon+2}\) is an \textit{elementary pointclass}
        if \(\Gamma\leq^* V_{\epsilon+1}\) and for all \(\Sigma_1\)-elementary embeddings \(j : V_{\epsilon}\to V_{\epsilon}\)
        and all \(A\in \Gamma\), the image and preimage of \(A\) under 
        the canonical extension of \(j\) to \(V_{\epsilon+1}\) belongs to \(\Gamma\).
        \item Let \(\text{EP}\) denote the set of all elementary pointclasses.     
        \item If \(\lambda\) is an ordinal, then \textit{\(\lambda\)-pointclass choice} holds
        if for any any total relation \(R\) on \(\text{EP}\times V_{\epsilon+2}\),
        there is a 
        sequence \(\langle\Gamma_\alpha\rangle_{\alpha < \lambda}\in \text{EP}^{\lambda}\)
        such that \(\bigcup_{\alpha < \lambda} \Gamma_\alpha \leq^* V_{\epsilon+1}\)
        and for all \(\beta < \lambda\),
        there is some \(A\in \Gamma_\beta\) such that \(R(\bigcup_{\alpha < \beta}\Gamma_\alpha,A)\).
        \item The \textit{weak coding lemma} states that
        for any 
        surjective \(\varphi : V_{\epsilon+1}\to \eta\),
        there is an elementary pointclass \(\Gamma\) such that
        every binary relation \(R\) on \(V_{\epsilon+1}\) with \(\sup \varphi[\dom(R)] = \eta\)
        has a subrelation
        \(S\in \Gamma\) such that \(\sup \varphi[\dom(S)] = \eta\).
    \end{itemize}
\end{defn}

Any model \(M\) as in \cref{thm:hod_cor}
satisfies \(\lambda\)-pointclass choice since \(M\) contains a
well\-order\-ed set of of elementary pointclasses whose union is
\(V_{\lambda+2}\).
\begin{lma}
    Assume \(I(\lambda,\epsilon)\)
    and \(\lambda\)-pointclass choice. 
    Then the weak coding lemma holds.
    \begin{proof}
        Suppose the weak coding lemma fails. Fix a surjection \(\varphi : V_{\epsilon+1}\to \eta\) 
        witnessing this.
        
        By \(\lambda\)-pointclass choice,
        there is a sequence \(\langle \Gamma_\alpha\rangle_{\alpha < \lambda}\in \mathcal H_{\epsilon+2}\)
        such that for each \(\beta < \lambda\), there is a binary relation \(R\)
        on \(V_{\epsilon+1}\) in \(\Gamma_\beta\) with \(\sup \varphi[\dom(R)] = \eta\) that has no subrelation
        \(S\in \bigcup_{\alpha <\beta} \Gamma_\alpha\) such that \(\sup\varphi[\dom(S)] = \eta\).

        Let \(M\in \mathcal H_{\epsilon+2}\) be a transitive set such that \(V_{\epsilon+1},\eta\in M\) and
        \(\langle \Gamma_\alpha\rangle_{\alpha < \lambda}\) and \(\varphi\) belong to \(M\).
        It is left as an exercise for the reader to check that
        \(I(\lambda,\epsilon)\) implies the existence of a transitive set \(M'\) with \(V_{\epsilon+1},\eta\in M'\)
        admitting an elementary embeddings \(j_0,j_1 : M'\to M\) such that \(\crit(j_0) < \crit(j_1) < \lambda\),
        \(j_0(\eta) = j_1(\eta) = \eta\), and for some
        \(\langle \Gamma_\alpha'\rangle_{\alpha < \lambda}\) and \(\varphi'\) in \(M'\),
        \(j_i(\langle \Gamma_\alpha'\rangle_{\alpha < \lambda}) = \langle \Gamma_\alpha\rangle_{\alpha < \lambda}\)
        and \(j_i(\varphi') = \varphi\) for \(i = 0,1\).

        Let \(\kappa = \crit(j_0)\).
        By elementarity, there is a relation \(R\) in \(\Gamma_{\kappa}'\) with \(\sup \varphi'[\dom(R)] = \eta\) 
        that has no subrelation \(S\in \bigcup_{\alpha < \kappa}\Gamma_\alpha'\) such that 
        \(\sup \varphi'[\dom(S)] = \eta\). Let \(R_0 = j_0(R)\), and note that \(R_0\in \Gamma_{j_0(\kappa)}\) has no 
        subrelation in \(\Gamma_{\kappa}\) such that \(\sup \varphi[\dom(S)] = \eta\).

        Since \(j_1(\kappa) = \kappa\) and \(R\in \Gamma_\kappa'\),
        \(j_1(R)\in \Gamma_{\kappa}\). Since \(\Gamma_{\kappa}\) is an elementary pointclass,
        \(R = j_1^{-1}[j_1(R)]\in \Gamma_{\kappa}\) and hence \(j_0[R]\in \Gamma_{\kappa}\).
        But \(S = j_0[R]\) is a subrelation of \(j_0(R)\) 
        such that \[\sup \varphi[\dom(S)] =\sup j\circ \varphi'[\dom(R)] = \eta\]
        and this is a contradiction.
    \end{proof}
\end{lma}

\begin{defn}
\begin{itemize}
    \item The \textit{local collection principle} states that
    every total binary relation on \(V_{\epsilon+1}\) has
    a total subrelation \(S\) such that \(\ran(S)\leq^* V_{\epsilon+1}\).

    \item The \textit{coding lemma} states that
    for any \(\eta < \theta_{\epsilon+2}\) and any 
    surjective \(\varphi : V_{\epsilon+1}\to \eta\),
    there is an elementary pointclass \(\Gamma\) such that
    every binary relation \(R\) on \(V_{\epsilon+1}\) with \(\varphi[\dom(R)] = \eta\) has a subrelation
    \(S\in \Gamma\) such that \(\varphi[\dom(S)] = \eta\).
\end{itemize}
\end{defn}
Any model \(M\) as in \cref{thm:hod_cor} satisfies the local collection principle.
\begin{thm}\label{thm:coding}
    Assume the local collection principle
    and the weak coding lemma. 
    Then the coding lemma holds.
\end{thm}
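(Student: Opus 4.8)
The plan is to bootstrap from the weak coding lemma (which codes relations up to cofinality, i.e.\ with $\sup\varphi[\dom(S)]=\eta$) to the full coding lemma (which demands $\varphi[\dom(S)]=\eta$, hitting \emph{every} value) by the standard Moschovakis trick: instead of coding $R$ directly, one codes the relation of pairs $\langle x, R\cap V_{\epsilon+1}\!\restriction\!\{y:\varphi(y)<\varphi(x)\}\rangle$, so that an honest-length subrelation of the auxiliary relation is forced to project onto a cofinal, hence (by a diagonal/induction argument along $\eta$) onto \emph{all} of $\eta$. The role of the local collection principle is to supply, at each stage, a single set-sized witnessing relation so that one can actually apply the weak coding lemma; without it one would not be able to keep the auxiliary relations inside some fixed $V_\delta$ with $\delta<\epsilon+2$.

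Concretely, I would proceed as follows. First fix $\eta<\theta_{\epsilon+2}$, a surjection $\varphi:V_{\epsilon+1}\to\eta$, and a relation $R$ on $V_{\epsilon+1}$ with $\varphi[\dom(R)]=\eta$. Define, for $x\in V_{\epsilon+1}$, the ``section below $\varphi(x)$'' $R_x = \{\,\langle y,z\rangle\in R : \varphi(y)<\varphi(x)\,\}$, coded as a single subset of $V_{\epsilon+1}$ via some fixed pairing (using $V_{\epsilon+1}\times V_{\epsilon+1}\leq^* V_{\epsilon+1}$, which holds at even levels and is implicit throughout). Consider the relation $R^\sharp$ relating $x$ to (a code for) any subrelation $S\subseteq R_x$ with $\varphi[\dom(S)]=\varphi(x)$, together with a component recording $\varphi(x)$ itself. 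By transfinite induction on $\varphi(x)=\alpha<\eta$ — using that $\varphi[\dom(R)]=\eta$ so $R$ meets every level — $R^\sharp$ is a total relation on (the relevant part of) $V_{\epsilon+1}$; apply the local collection principle to extract a total subrelation $R^\flat$ with $\ran(R^\flat)\leq^* V_{\epsilon+1}$, and then collect everything into a single relation $\widetilde R$ on $V_{\epsilon+1}$ whose domain still surjects cofinally onto $\eta$ under an appropriate modification $\widetilde\varphi$ of $\varphi$. Now apply the \emph{weak} coding lemma to $\widetilde R$ and $\widetilde\varphi$: we get an elementary pointclass $\Gamma$ and a subrelation $\widetilde S\in\Gamma$ with $\sup\widetilde\varphi[\dom(\widetilde S)]=\eta$. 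Finally, unwind: the ``limit'' of the section-codes appearing in $\widetilde S$ along a cofinal set of levels assembles, inside the pointclass $\Gamma$ (which is closed under the relevant operations and closed under unions of its members that are $\leq^* V_{\epsilon+1}$), into a subrelation $S\subseteq R$ with $\varphi[\dom(S)]=\eta$ — the point being that a subrelation correct on a cofinal set of initial segments is correct everywhere.

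The main obstacle is the bookkeeping that keeps the auxiliary relation $\widetilde R$ honestly a relation on $V_{\epsilon+1}$ (rather than on $V_{\epsilon+2}$) while still encoding enough information that honest-length subrelations of $\widetilde R$ decode to honest-\emph{onto} subrelations of $R$; this is exactly where one pays for working with $\leq^*$ rather than $|\cdot|$, and where the local collection principle is doing the heavy lifting by bounding $\ran(R^\flat)$. A secondary technical point is checking that the pointclass $\Gamma$ produced by the weak coding lemma is closed under the decoding operation — i.e.\ that taking the union of the coded sections of $\widetilde S$ lands back in $\Gamma$; this follows from the definition of elementary pointclass (closure under images and preimages of canonical extensions of $\Sigma_1$-embeddings of $V_\epsilon$, which suffices to simulate the relevant union since each section is itself $\leq^* V_{\epsilon+1}$), but it requires a careful statement of the decoding as a definable-over-$V_{\epsilon+1}$ operation commuting with canonical extensions. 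Once these two points are pinned down, the diagonal induction on $\eta$ closing off the ``cofinal $\Rightarrow$ onto'' step is routine and mirrors Moschovakis's original argument.
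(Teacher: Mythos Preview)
Your outline has the right Moschovakis-style shape, but there is a genuine uniformity gap. The coding lemma demands a single elementary pointclass depending only on $\varphi$ such that \emph{every} $R$ with $\varphi[\dom(R)]=\eta$ has a suitable subrelation in it. In your argument you fix $R$ first, form the $R$-dependent relation $R^\sharp$, apply local collection to obtain $R^\flat$ with $\ran(R^\flat)\leq^* V_{\epsilon+1}$, and only then invoke the weak coding lemma. The surjection $\psi$ onto $\ran(R^\flat)$ --- and hence the ``decoding'' operation taking $\widetilde S$ to $S$ --- depends on $R$. Even if the pointclass $\Gamma$ supplied by the weak coding lemma depends only on $\widetilde\varphi$, there is no reason $\Gamma$ is closed under this $R$-dependent decoding, so you have not placed $S$ in any pointclass fixed in advance of $R$. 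Your appeal to closure of elementary pointclasses under images and preimages of canonical extensions does not yield closure under $\widetilde S\mapsto \bigcup\{\psi(e):e\in\ran(\widetilde S)\}$ for $\psi$ varying with $R$. Note also that the ``transfinite induction on $\varphi(x)$'' you invoke to see $R^\sharp$ is total is vacuous: $R_x$ itself already witnesses totality, so that is not where the real induction lives.

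The paper's argument repairs exactly this point. It proceeds by an \emph{outer} induction on $\eta$: assuming the coding lemma for all $\gamma<\eta$, one uses local collection once, uniformly, \emph{before} any particular $R$ enters, to produce a single elementary pointclass $\Gamma=\{A_e\}_{e\in V_{\epsilon+1}}$ witnessing coding for every $\gamma<\eta$. One then chooses an elementary pointclass $\Lambda$ that contains the universal set $U=\{(e,y):y\in A_e\}$, witnesses the weak coding lemma for $\eta$, and is closed under composition of relations. For arbitrary $R$ one forms $\tilde R(x,e)\Leftrightarrow A_e\subseteq R$ and $\varphi[\dom(A_e)]=\varphi(x)$; the induction hypothesis gives $\sup\varphi[\dom(\tilde R)]=\eta$, weak coding yields $\tilde S\in\Lambda$ with cofinal domain, and the desired $S$ is obtained by composing $\tilde S$ with the \emph{fixed} set $U\in\Lambda$, so $S\in\Lambda$. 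The crucial difference from your sketch is that local collection is applied at the level of pointclasses (to assemble $\Gamma$ uniformly over $\gamma<\eta$), not at the level of subrelations of a particular $R$; the decoding is then a single $R$-independent operation for which closure can be arranged in advance.
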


\begin{proof}
    The proof is by induction on \(\eta\). Let \(\varphi :V_{\epsilon+1}\to \eta\) be a surjection.
    By the local collection principle and our induction hypothesis, 
    there is an elementary pointclass \(\Gamma = \{A_e\}_{e\in V_{\epsilon+1}}\) 
    that witnesses the coding lemma for all \(\gamma < \eta\).

    Let \(\Lambda\) be an elementary pointclass containing \[U = \{(e,y)\in V_{\epsilon+1}\times V_{\epsilon+1} : y\in A_e\}\] that
    witnesses the weak coding lemma for \(\eta\)
    and is closed under compositions of binary relations.
    We will show that \(\Lambda\) witnesses the coding lemma for \(\eta\).
    
    Let \(R\) be a binary relation on \(V_{\epsilon+1}\) such that
    \(\varphi[\dom(R)] = \eta\). Let \(\tilde R(x,e)\) hold if 
    \(A_e\) is a subrelation of \(R\) such that
    \(\varphi[\dom(A_e)] = \varphi(x)\). By our induction hypothesis,
    \(\varphi[\dom(R)] = \eta\), and so 
    by the weak coding lemma, \(\tilde R\)  has a subrelation \(\tilde S\in \Lambda\)
    such that \(\varphi[\dom(\tilde S)] = \eta\).
    Now \(S = U \circ \tilde R\) is a subrelation of \(R\) in \(\Lambda\)
    such that \(\varphi[\dom(S)] = \eta\).
\end{proof}

\section{The Hartogs number of \(V_{\epsilon+2}\)}
The \textit{Hartogs number} of a set \(X\), denoted by \(\aleph(X)\), 
is the least ordinal \(\eta\) such that there
is no \(\eta\)-sequence of distinct elements of \(X\).
The main theorem of this section computes the Hartogs number of the even levels of the
cumulative hierarchy:
\begin{thm}\label{thm:hartogs}
    Suppose \(\epsilon\) is an even ordinal and there is an elementary embedding from \(V_{\epsilon+3}\) to itself.
    Then \(\aleph(V_{\epsilon+2})=\theta_{\epsilon+2}\).
\end{thm}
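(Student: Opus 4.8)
The plan is to prove the two inequalities $\aleph(V_{\epsilon+2}) \leq \theta_{\epsilon+2}$ and $\aleph(V_{\epsilon+2}) \geq \theta_{\epsilon+2}$ separately. The second is trivial: since $V_{\epsilon+1} \in V_{\epsilon+2}$, any wellordered set that is a surjective image of $V_{\epsilon+1}$ injects into $V_{\epsilon+2}$ (via the map sending $\xi$ to some preimage together with enough bookkeeping, or more simply by noting $\theta_{\epsilon+2} = \aleph^*(V_{\epsilon+1})$ and $\aleph \geq \aleph^*$ always fails, so instead: a surjection $V_{\epsilon+1} \to \eta$ together with a wellorder of $V_{\epsilon+1}$-sized blocks — actually the cleanest route is that $\aleph(X) \geq \aleph^*(X)$ is false in general, so one argues directly that from $\eta \leq^* V_{\epsilon+1}$ one gets an $\eta$-sequence of distinct subsets of $V_{\epsilon+1}$, hence of distinct elements of $V_{\epsilon+2}$). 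The real content is the upper bound $\aleph(V_{\epsilon+2}) \leq \theta_{\epsilon+2}$, i.e. there is no $\theta_{\epsilon+2}$-sequence of distinct subsets of $V_{\epsilon+1}$.

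To get the upper bound, suppose toward a contradiction that $\langle A_\xi : \xi < \theta_{\epsilon+2}\rangle$ is a sequence of distinct subsets of $V_{\epsilon+1}$. Fix an elementary $j : V_{\epsilon+3} \to V_{\epsilon+3}$ with critical point $\kappa$ and let $\lambda = \sup\{\kappa, j(\kappa), \dots\}$; by the lemma preceding Theorem~\ref{thm:coding}'s section, $I(\lambda, \epsilon)$ holds, and since the restriction $j \restriction V_{\epsilon+2}$ witnesses the hypotheses of the coding-lemma machinery, the coding lemma (\cref{thm:coding}) applies. The idea is to use the $\theta_{\epsilon+2}$-sequence to define, for each $\eta < \theta_{\epsilon+2}$, a "large enough" relation and then apply the coding lemma to compress the first $\eta$ coordinates of the sequence into a single elementary pointclass, uniformly; since there are only $\leq^* V_{\epsilon+1}$-many members of any fixed elementary pointclass and elementary pointclasses are closed under the canonical extension of $j \restriction V_\epsilon$, one eventually contradicts either the distinctness of the $A_\xi$ or \cref{thm:strong_suzuki} via \cref{lma:super_surj}. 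Concretely: let $\varphi : V_{\epsilon+1} \to \eta$ witness $\eta \leq^* V_{\epsilon+1}$ for $\eta$ large below $\theta_{\epsilon+2}$, consider the relation $R$ relating a code $c$ for $\varphi(c) = \xi$ to a code for $A_\xi$; the coding lemma gives a subrelation $S$ in an elementary pointclass $\Gamma$ with $\varphi[\dom(S)] = \eta$, so $\Gamma$ "knows" an $\eta$-sequence listing $\{A_\xi : \xi < \eta\}$ (with multiplicity), and since $\Gamma \leq^* V_{\epsilon+1}$ this shows $\{A_\xi : \xi < \eta\} \leq^* V_{\epsilon+1}$, hence has order type $< \theta_{\epsilon+2}$ under the induced wellorder — but letting $\eta$ range cofinally this forces $j[\theta_{\epsilon+2}]$ to be definable over $\mathcal{H}_{\epsilon+2}$ from $j[V_{\epsilon+1}]$ and a parameter, contradicting \cref{thm:strong_suzuki}.

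The main obstacle I expect is making the last step genuinely uniform in $\eta$: the coding lemma as stated produces, for each fixed $\eta$, a pointclass $\Gamma$, but a priori $\Gamma$ depends on $\eta$ and on the surjection $\varphi$, and the parameter needed to recover the $\eta$-sequence may not be in $j[\mathcal{H}_{\epsilon+2}]$. Overcoming this requires the full strength of the general result that the section promises but the excerpt only states the corollary (\cref{thm:hod_cor}) of — namely that inside $M = \mathrm{HOD}_{V_{\epsilon+1}, A}$ where $A$ codes the $\theta_{\epsilon+2}$-sequence, one has $P(\eta) \cap M \leq^* V_{\epsilon+1}$ for all $\eta < \theta_{\epsilon+2}^M$, and then observing that $\theta_{\epsilon+2}^M \geq \theta_{\epsilon+2}$ when $A$ is a $\theta_{\epsilon+2}$-sequence (since the sequence itself, being in $M$, gives $\theta_{\epsilon+2} \leq^* V_{\epsilon+1}$-many distinct elements of $P(V_{\epsilon+1}) \cap M$). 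One then derives the contradiction inside $M$: applying \cref{thm:hod_cor} with $\eta$ the order type achieved, the sequence $\langle A_\xi\rangle$ would exhibit $\theta_{\epsilon+2} \leq^* P(\eta) \cap M \leq^* V_{\epsilon+1}$ for some $\eta < \theta_{\epsilon+2}$, absurd. I would therefore organize the proof as: (i) pass to $M = \mathrm{HOD}_{V_{\epsilon+1}, A}$ with $A$ the alleged sequence; (ii) note $I(\lambda, \epsilon)^M$ by downward absoluteness; (iii) invoke \cref{thm:hod_cor}; (iv) read off the contradiction.
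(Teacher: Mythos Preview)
Your steps (i)--(iii) match the paper's strategy exactly: pass to \(M = \HOD_{V_{\epsilon+1},A}\) where \(A\) is the putative \(\theta_{\epsilon+2}\)-sequence, observe \(I(\lambda,\epsilon)^M\), and invoke the coding machinery of \cref{thm:hod_cor}. The easy direction is also fine once you stop second-guessing yourself: a surjection \(V_{\epsilon+1}\to\eta\) yields the \(\eta\)-sequence of its fibers, which are distinct elements of \(V_{\epsilon+2}\).

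Step (iv), however, does not go through. The sequence \(A\) is an injection of \(\theta_{\epsilon+2}\) into \(V_{\epsilon+2}\), so what it witnesses is \(\theta_{\epsilon+3}^M \geq \theta_{\epsilon+2}\) (there is an \(M\)-surjection \(V_{\epsilon+2}\cap M\to\theta_{\epsilon+2}\), namely \(B\mapsto\) the unique \(\alpha\) with \(B = A_\alpha\)). It does \emph{not} witness \(\theta_{\epsilon+2}^M \geq \theta_{\epsilon+2}\): the \(A_\xi\) are subsets of \(V_{\epsilon+1}\), not surjections from \(V_{\epsilon+1}\) onto ordinals, and there is no way to convert them into such. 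Consequently your displayed chain \(\theta_{\epsilon+2}\leq^* P(\eta)\cap M\leq^* V_{\epsilon+1}\) is unsupported---the \(A_\xi\) live in \(P(V_{\epsilon+1})\), not in \(P(\eta)\) for any ordinal \(\eta\), so \cref{thm:hod_cor} does not apply to them.

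What the paper actually uses at this point is \cref{cor:hod_bound}, the statement that \(\theta_{\epsilon+3}^M < \theta_{\epsilon+2}\) for any \(M=\HOD_{V_{\epsilon+1},A}\). This combines with \(\theta_{\epsilon+3}^M\geq\theta_{\epsilon+2}\) (from the sequence) to give the contradiction. Proving \cref{cor:hod_bound} is the real work: one uses \cref{thm:hod_cor} to build an \(M\)-extender \(E\) of length \(\theta_{\epsilon+2}^M\) from \(j\), then shows by a factor-embedding surjectivity argument that \(j\restriction\theta_{\epsilon+3}^M\) coincides with the ultrapower by \(E\), whence \(j[\theta_{\epsilon+3}^M]\) is definable in \(\mathcal H_{\epsilon+2}\) from \(j[V_\epsilon]\) and parameters in \(\ran(j)\); \cref{thm:strong_suzuki} then forces \(\theta_{\epsilon+3}^M<\theta_{\epsilon+2}\). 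Your proposal gestures at \cref{thm:strong_suzuki} earlier but drops it in the final organization, and without this extender argument there is no contradiction to read off.
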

It is easily provable in ZF that \(\aleph(V_{\epsilon+2}) \geq \aleph^*(V_{\epsilon+1}) = \theta_{\epsilon+2}\),
so the main content of \cref{thm:hartogs}
is that there is no \(\theta_{\epsilon+2}\)-sequence of distinct subsets of \(V_{\epsilon+1}\).
This does not follow from the existence
of an elementary embedding from \(V_{\epsilon+2}\) to itself.

We begin by proving the following weak version of \cref{thm:hartogs}.
\begin{prp}\label{prp:theta_limit}
    Suppose \(\epsilon\) is an even ordinal and there is an elementary \(j: V_{\epsilon+3}\to V_{\epsilon+3}\).
    Then there is no sequence \(\vec \varphi = \langle \varphi_\eta : \eta < \theta_{\epsilon+2}\rangle\)
    such that for all \(\eta < \theta_{\epsilon+2}\), \(\varphi_\eta\) is a surjection from 
    \(V_{\epsilon+1}\) onto \(\eta\).
    \begin{proof}
        Assume towards a contradiction that there is such a sequence. 
        This implies \(\theta_{\epsilon+2}\) is regular
        by the standard ZFC argument that \(\aleph^*(X)\) is regular for any set \(X\).

        Let \(\vec \psi = j(\vec \varphi)\).
        For any \(\eta \in j[\theta_{\epsilon+2}]\),
        \[j[\theta_{\epsilon+2}]\cap \eta = \psi_\eta\circ j^+[V_{\epsilon+1}]\]
        It follows that \(j[\theta_{\epsilon+2}]\) is the unique 
        \(\omega\)-closed unbounded subset \(C\subseteq \theta_{\epsilon+2}\) 
        such that for all \(\eta\in C\), \(C\cap \eta = \psi_\eta\circ j^+[V_{\epsilon+1}]\);
        here we use that any other such \(\omega\)-closed unbounded set has unbounded
        intersection with \(j[\theta_{\epsilon+2}]\), which is a consequence
        of the regularity of \(\theta_{\epsilon+2}\).

        It follows that \(j[\theta_{\epsilon+2}]\) is definable in \(\mathcal H_{\epsilon+3}\)
        from \(j[V_{\epsilon}]\) and \(\vec\psi\in \ran(j)\), which contradicts 
        \cref{thm:strong_suzuki}.
    \end{proof}
\end{prp}

\begin{cor}\label{cor:hod_bound}
    Suppose \(\epsilon\) is an even ordinal and there is an elementary \(j: V_{\epsilon+3}\to V_{\epsilon+3}\).
    Then for any class \(A\), letting \(M = \HOD_{V_{\epsilon+1},A}\), 
    \(\theta_{\epsilon+3}^{M} < \theta_{\epsilon+2}\).
    \begin{proof}
        By incorporating \(j\) into \(A\), we may assume without loss of generality
        that \(j\restriction M \in M\). We then have \(j(V_{\epsilon+2}\cap M) = V_{\epsilon+2}\cap M\).

        Using \cref{thm:hod_cor}, fix a sequence \(\langle \varphi_\eta : \eta < \theta_{\epsilon+2}^M\rangle \in M\)
        such that \(\varphi_\eta : V_{\epsilon+1}\to P(\eta)\cap M\) is a surjection.
        Note that \(j\restriction P(\eta)\cap M\)
        is uniformly definable from \(j(\varphi_\eta)\) and \(j[V_\epsilon]\) in \(\mathcal H_{\epsilon+2}\)
        for \(\eta < \theta_{\epsilon+2}^M\), and
        as a consequence \(j\restriction P_\text{bd}(\theta_{\epsilon+2}^M)\cap M\) is definable from 
        \(j(\langle \varphi_\eta : \eta < \theta_{\epsilon+2}^M\rangle)\) and \(j\restriction \theta_{\epsilon+2}^M\)   
        in \(\mathcal H_{\epsilon+2}\). Since \(\theta_{\epsilon+2}^M < \theta_{\epsilon+2}\), it follows that 
        \(j\restriction P_\text{bd}(\theta_{\epsilon+2}^M)\cap M\) is definable in \(\mathcal H_{\epsilon+2}\) from \(j[V_\epsilon]\)
        and parameters in the range of \(j\).

        Let \(E\) be the \(M\)-extender of length \(\theta_{\epsilon+2}^M\) derived from \(j\), so
        \(E\) and \(j\restriction P_\text{bd}(\theta_{\epsilon+2}^M)\cap M\) are 
        essentially the same object.
        Let \(i : \theta_{\epsilon+3}^M \to \theta_{\epsilon+3}^M\) 
        be the ultrapower associated to \(E\) (using only functions in \(M\)).
        We claim \(i = j\restriction \theta_{\epsilon+3}^M\).
        Let \(k : \Ult(\theta_{\epsilon+3}^M,E)\to \Ord\)
        be the factor embedding defined by \(k([f,a]_E) = j(f)(a)\) for \(a\in [\theta_{\epsilon+2}^M]^{<\omega}\) and 
        \(f\in M\) a function from an ordinal less than \(\theta_{\epsilon+2}^M\) into \(\theta_{\epsilon+3}^M\).
        We will show that \(k\) is the identity, or equivalently that \(k\) is surjective.
        
        Fix \(\xi < \theta_{\epsilon+3}^M\),
        and let us show \(\xi\in \ran(k)\). Let \(R\in M\) be a prewellorder of \(V_{\epsilon+2}\cap M\)
        of length greater than \(\xi\). Then \(j(R)\in M\) is a prewellorder of \(V_{\epsilon+2}\cap M\)
        of length greater than \(\xi\). Fix \(A\in V_{\epsilon+2}\cap M\) such that \(\rank_{j(R)}(A) = \xi\). 
        For each extensional \(\sigma\subseteq V_{\epsilon}\), let \(j_\sigma : V_\epsilon\to V_\epsilon\)
        denote the inverse of the transitive collapse of \(\sigma\) and let \(A_\sigma\subseteq V_{\epsilon+1}\) denote the
        preimage of \(A\) under the canonical extension of \(j_\sigma\).
        Define a partial function \(g : V_{\epsilon+1}\to \theta_{\epsilon+3}^M\)
        by \(g(\sigma) = \rank_R(A_\sigma)\). Then \(g\in M\) and \(j(g)(j[V_\epsilon]) = \xi\).
        The ordertype \(\nu\) of \(\ran(g)\) is less than \(\theta_{\epsilon+2}^M\). 
        Let \(h : \nu \to \ran(g)\) be
        the increasing enumeration. Then \(h\in M\) is
        a function from an ordinal less than \(\theta_{\epsilon+2}^M\) into \(\theta_{\epsilon+3}^M\),
        and since \(\xi\in \ran(j(g)) = \ran(j(h))\), there is some 
        \(\alpha < j(\nu)\) such that \(j(h)(\alpha) = \xi\). This shows \(\xi\in \ran(k)\),
        since \(\xi = k([h,\alpha]_E)\).

        It follows that \(i = j\restriction \theta_{\epsilon+3}^M\), and so
        \(j[\theta_{\epsilon+3}^M]\) is definable in \(\mathcal H_{\epsilon+2}\)
        from \(j[V_\epsilon]\) and parameters in the range of \(j\). By \cref{thm:strong_suzuki},
        \(\theta_{\epsilon+3}^M < \theta_{\epsilon+2}\).
    \end{proof}
\end{cor}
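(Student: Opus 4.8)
The plan is to deduce the bound from \cref{thm:strong_suzuki} by making \(j\) too definable on \(\theta_{\epsilon+3}^M\). There are two definability inputs: (i) inside \(M\), the coding‑lemma consequence \cref{thm:hod_cor} codes the bounded subsets of \(\theta_{\epsilon+2}^M\) that lie in \(M\) by elements of \(V_{\epsilon+1}\); and (ii) by the periodicity results of \cref{section:periodicity}, \(j\) acts on these codes through \(j\restriction V_{\epsilon+1}\), which is the canonical extension of \(j\restriction V_\epsilon\) and is definable over \(V_{\epsilon+1}\) from \(j[V_\epsilon]\). Combined, these make \(j\restriction\theta_{\epsilon+3}^M\) definable over \(\mathcal H_{\epsilon+2}\) from \(j[V_\epsilon]\) and parameters in the range of \(j\), at which point \cref{thm:strong_suzuki} forces \(\theta_{\epsilon+3}^M<\theta_{\epsilon+2}\).

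First I would absorb a code for \(j\) into \(A\): this only enlarges \(M=\HOD_{V_{\epsilon+1},A}\), and the conclusion is monotone in \(M\), so it is harmless. Afterwards \(j\restriction M\in M\), \(j\) restricts to an elementary self‑embedding of \(M\), and \(j(V_{\epsilon+2}\cap M)=V_{\epsilon+2}\cap M\). The ordinals \(\epsilon,\epsilon+1,\epsilon+2\) are definable in \(V_{\epsilon+3}\) and hence fixed by \(j\), so \(\crit(j)<\epsilon\); thus \(j\restriction V_{\epsilon+2}\) witnesses \(I(\lambda,\epsilon)\) for \(\lambda=\sup_n\crit(j^n)\le\epsilon\), and this is downward absolute to \(M\) because \(\epsilon\) is even. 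So \cref{thm:hod_cor} holds in \(M\) and supplies a sequence \(\langle\varphi_\eta:\eta<\theta_{\epsilon+2}^M\rangle\in M\) of surjections \(\varphi_\eta:V_{\epsilon+1}\to P(\eta)\cap M\). Composing with surjections \(P(\eta)\cap M\to\eta\) available in \(M\) gives, inside \(M\), a sequence of surjections from \(V_{\epsilon+1}\) onto each \(\eta<\theta_{\epsilon+2}^M\); since \cref{prp:theta_limit} rules out such a sequence of length \(\theta_{\epsilon+2}\) in \(V\), we conclude \(\theta_{\epsilon+2}^M<\theta_{\epsilon+2}\).

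Next I would verify that, uniformly in \(\eta<\theta_{\epsilon+2}^M\), the map \(j\restriction(P(\eta)\cap M)\) is definable over \(\mathcal H_{\epsilon+2}\) from \(j(\varphi_\eta)\) and \(j[V_\epsilon]\): if \(x=\varphi_\eta(a)\) with \(a\in V_{\epsilon+1}\), then \(j(x)=j(\varphi_\eta)(j(a))\), and the value \(j(a)=\big(j\restriction V_{\epsilon+1}\big)(a)\) can be computed because \(j\restriction V_{\epsilon+1}\), being the canonical extension of \(j\restriction V_\epsilon\) (periodicity), has graph definable over \(V_{\epsilon+1}\) from a code for \(j\restriction V_\epsilon\), and \(j\restriction V_\epsilon\) is in turn definable over \(V_{\epsilon+1}\) from \(j[V_\epsilon]\) as the inverse of the transitive collapse of \((j[V_\epsilon],\in)\). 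To assemble these over \(\eta<\theta_{\epsilon+2}^M\) into a definition of \(j\restriction\big(P_\text{bd}(\theta_{\epsilon+2}^M)\cap M\big)\) — equivalently, of the \(M\)‑extender \(E\) of length \(\theta_{\epsilon+2}^M\) derived from \(j\) — one uses the single parameter \(j(\langle\varphi_\eta\rangle)\in\ran(j)\) together with the reindexing \(j\restriction\theta_{\epsilon+2}^M\), which is itself definable over \(\mathcal H_{\epsilon+2}\) from \(j[V_\epsilon]\) since \(\theta_{\epsilon+2}^M<\theta_{\epsilon+2}\). Thus \(E\) is definable over \(\mathcal H_{\epsilon+2}\) from \(j[V_\epsilon]\) and parameters in the range of \(j\).

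Finally I would show that \(E\) already computes \(j\) on all of \(\theta_{\epsilon+3}^M\). Form the ultrapower of the ordinals below \(\theta_{\epsilon+3}^M\) by \(E\), using only functions in \(M\), with factor embedding \(k([f,a]_E)=j(f)(a)\); the claim is that \(k\) is surjective, hence the identity. Given \(\xi<\theta_{\epsilon+3}^M\), pick \(R\in M\) a prewellorder of \(V_{\epsilon+2}\cap M\) and \(A\in V_{\epsilon+2}\cap M\) with \(\rank_{j(R)}(A)=\xi\). The function \(g(\sigma)=\rank_R(A_\sigma)\), where \(A_\sigma\subseteq V_{\epsilon+1}\) is the pullback of \(A\) along the canonical extension of the inverse transitive collapse of an extensional \(\sigma\subseteq V_\epsilon\), lies in \(M\), has range of ordertype \(<\theta_{\epsilon+2}^M\), and satisfies \(j(g)(j[V_\epsilon])=\xi\); so replacing \(g\) by the increasing enumeration of its range exhibits \(\xi\) in \(\ran(k)\). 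Hence \(j\restriction\theta_{\epsilon+3}^M\) is the ultrapower map of \(E\), whence \(j\big[\min(\theta_{\epsilon+3}^M,\theta_{\epsilon+2})\big]\) is definable over \(\mathcal H_{\epsilon+2}\) from \(j[V_\epsilon]\) and parameters in \(\ran(j)\). Were \(\theta_{\epsilon+3}^M\ge\theta_{\epsilon+2}\), this would make \(j[\theta_{\epsilon+2}]\) so definable, contradicting \cref{thm:strong_suzuki} (with \(\gamma=\epsilon<\epsilon+2\)); therefore \(\theta_{\epsilon+3}^M<\theta_{\epsilon+2}\). I expect the main obstacle to be the surjectivity of \(k\) — that the extender \(E\), of length only \(\theta_{\epsilon+2}^M\), already captures \(j\) on the whole of \(\theta_{\epsilon+3}^M\) via this prewellordering argument — with the remaining steps being bookkeeping around the periodicity machinery and the cited theorems.
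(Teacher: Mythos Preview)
Your proposal is correct and follows essentially the same route as the paper: absorb \(j\) into \(A\), use \cref{thm:hod_cor} to obtain the surjections \(\varphi_\eta\), derive definability of \(j\restriction(P_{\mathrm{bd}}(\theta_{\epsilon+2}^M)\cap M)\) from \(j[V_\epsilon]\) and range parameters, show the resulting \(M\)-extender \(E\) already computes \(j\) on \(\theta_{\epsilon+3}^M\) via the prewellorder/\(A_\sigma\) argument, and conclude by \cref{thm:strong_suzuki}. You are also slightly more explicit than the paper in two places---justifying \(\theta_{\epsilon+2}^M<\theta_{\epsilon+2}\) via \cref{prp:theta_limit}, and noting the monotonicity in \(M\) that licenses enlarging \(A\)---which only clarifies the exposition.
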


\begin{proof}[Proof of \cref{thm:hartogs}]
    Assume towards a contradiction that \(A = \langle A_\alpha : \alpha < \theta_{\epsilon+2}\rangle\)
    is a sequence of distinct subsets of \(V_{\epsilon+1}\).
    Let \(M = \HOD_{V_{\epsilon+1},A}\).
    Since \(A\in M\), \(\theta_{\epsilon+3}^M \geq \theta_{\epsilon+2}\), contradicting \cref{cor:hod_bound}.
\end{proof}

\section{Rank Berkeley and rank reflecting cardinals}
A cardinal \(\lambda\) is \textit{rank Berkeley} if for all ordinals \(\alpha < \lambda \leq \beta\),
there is an elementary embedding from \(V_\beta\) to itself with critical point between \(\alpha\) and \(\lambda\).
The term is due to Schlutzenberg
who noticed 
that if there is an elementary embedding from the universe of sets to itself, 
then there is a rank Berkeley cardinal. (This was realized 
independently and earlier by Woodin.)

Indeed, if \(j:V\to V\) is an elementary embedding with critical point \(\kappa\), 
then we claim \(\lambda = \sup \{\kappa,j(\kappa),j(j(\kappa)),\dots\}\)
is rank Berkeley.
Assume not, towards a contradiction, and note that \(j\) 
fixes the lexicographically least pair \((\alpha,\beta)\) 
of ordinals \(\alpha < \lambda \leq \beta\) such that
\(V_\beta\) admits no elementary embedding into itself with critical 
point between \(\alpha\) and \(\lambda\). Since \(\lambda\) is the least
fixed point of \(j\) above its critical point \(\kappa\), we have \(\alpha < \kappa\).
Therefore \(j\restriction V_\beta\) witnesses that
there is an elementary embedding from \(V_\beta\) to itself with critical 
point between \(\alpha\) and \(\lambda\), contrary to our choice of \((\alpha,\beta)\).

We prefer to work with rank Berkeley cardinals over elementary embeddings
from the universe of sets to itself, partly because the former notion 
is first-order and seems to capture all of the set-theoretic content of the latter. 
Another reason for our preference is that the least rank Berkeley cardinal
is an important threshold in the choiceless theory of large cardinals, as we now
explain.

A cardinal \(\kappa\) is \textit{almost supercompact} if for all ordinals \(\xi < \kappa\leq \beta\),
for some ordinal \(\bar \beta\) between \(\xi\) and \(\kappa\), there is an elementary embedding
from \(V_{\bar \beta}\) into \(V_\beta\) that fixes \(\xi\). In the context of ZFC, every almost supercompact
cardinal is either a supercompact cardinal or a limit of supercompact cardinals.

An apparently much weaker notion is that of a \textit{rank reflecting cardinal}, a cardinal
\(\kappa\) such that for all ordinals \(\xi < \kappa\leq \beta\) and all formulas \(\varphi(x)\) in the language of set
theory, there is an ordinal \(\bar \beta\) between \(\xi\) and \(\kappa\) such that
\(V_{\bar \beta}\vDash \varphi(\xi)\) if and only if \(V_\beta\vDash \varphi(\xi)\).
Every almost supercompact cardinal is rank reflecting, and every supercompact cardinal
is a limit of rank reflecting cardinals. In fact, the existence of a proper class
of rank reflecting cardinals is provable in ZF as an easy consequence of the L\'evy-Montague reflection theorem.

We will take advantage of the following classification of almost supercompact cardinals,
which shows that rank reflection need not be so much weaker than almost supercompactness after all:
\begin{thm}\label{thm:dichotomy}
    Let \(\lambda\) denote the least rank Berkeley cardinal.
    \begin{enumerate}[(1)]
        \item A cardinal \(\kappa \leq \lambda\) is almost supercompact if and only if it is either
        supercompact or a limit of supercompact cardinals.\label{item:super}
        \item A cardinal \(\kappa\geq \lambda\) is almost supercompact if and only if it is rank reflecting.\label{item:rank_refl}
    \end{enumerate}
\end{thm}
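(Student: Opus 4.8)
The plan is to prove both directions of each equivalence, noting first that the forward directions are essentially trivial from the definitions: every almost supercompact cardinal is rank reflecting (a $V_{\bar\beta} \to V_\beta$ embedding fixing $\xi$ transfers any first-order property of $\xi$), and in the context where $\kappa \leq \lambda$, the equivalence of almost supercompactness with ``supercompact or limit of supercompacts'' is the standard ZFC-style argument — I would check that the usual proof (that an almost supercompact cardinal which is not a limit of supercompacts is itself supercompact, via reflecting the failure of supercompactness down) goes through below a rank Berkeley cardinal, where we still have enough embeddings to run Kunen-style arguments locally without colliding with the Kunen inconsistency. So the real content is: for $\kappa \geq \lambda$, rank reflecting implies almost supercompact.

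For part (\ref{item:rank_refl}), the hard direction, the idea is to use a rank Berkeley cardinal $\lambda \leq \kappa$ to manufacture, from the mere first-order reflection property of $\kappa$, actual elementary embeddings into $V_\beta$. Fix $\xi < \kappa \leq \beta$ witnessing a failure of almost supercompactness, chosen (lexicographically least, say) so that this failure is described by a formula that is absolute enough to be reflected. Since $\lambda$ is rank Berkeley and $\xi < \lambda \leq \beta'$ for a suitable large $\beta' \geq \beta$, there is an elementary $i : V_{\beta'} \to V_{\beta'}$ with $\xi < \crit(i) < \lambda \leq \kappa$. The point is that the statement ``there is $\bar\beta$ between $\xi$ and $\kappa$ with an elementary embedding $V_{\bar\beta} \to V_\beta$ fixing $\xi$'' — i.e., the almost supercompactness instance for $(\xi,\beta)$ — is itself a statement about $V_{\beta'}$, and if it fails, then by rank reflection applied at $\kappa$ (to the parameter $\xi$ and an appropriate coding of $\beta$) one gets a $\bar\beta$ between $\xi$ and $\kappa$ reflecting the relevant facts; one then has to bootstrap this reflection into an honest embedding $V_{\bar\beta} \to V_\beta$ fixing $\xi$, using the embedding $i$ supplied by rank Berkeleyness to absorb the gap. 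The candidate embedding is built as a direct limit / ultrapower-style construction along the critical sequence of $i$ restricted below $\lambda$, exploiting that $\crit(i)$ can be taken above $\xi$, so $i$ fixes $\xi$ and any parameters below it.

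The main obstacle I anticipate is the interface between the purely first-order reflection hypothesis and the existence of a genuine elementary embedding: rank reflection only gives agreement on formulas, not a map, whereas almost supercompactness demands a map. Bridging this requires that the reflecting ordinal $\bar\beta$ be found inside the range of (or compatible with) the Berkeley embedding $i$, so that $i$ itself — or a piece of it obtained by the canonical-extension machinery of \cref{section:periodicity} — can be reinterpreted as an embedding $V_{\bar\beta} \to V_\beta$. I would handle this by a careful choice of the formula $\varphi$ fed into rank reflection: it should assert not just ``$\xi$ has property $P$'' but ``$V_{\text{height}}$ contains an elementary embedding with such-and-such critical point fixing $\xi$'', so that the reflected statement at $\bar\beta$ directly yields the embedding. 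The delicate point is ensuring this $\Sigma$-type assertion is correctly reflected — here one leans on the fact that below the least rank Berkeley cardinal, and at $\kappa \geq \lambda$ itself, the relevant embeddings exist in profusion, so the witness to the reflected existential is actually present. Once the embedding $V_{\bar\beta} \to V_\beta$ fixing $\xi$ is in hand for the least bad $(\xi,\beta)$, elementarity of $i$ (which fixes $\xi$ and $\beta$ by minimality) delivers the desired contradiction, exactly as in the rank Berkeley argument preceding the theorem.
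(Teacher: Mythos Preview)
Your approach to part \ref{item:rank_refl} has a genuine gap. You correctly isolate the obstacle --- rank reflection gives agreement on formulas, not maps --- but your proposed fix does not close it. If the formula $\varphi$ you reflect asserts the existence of some embedding, then reflecting to $V_{\bar\beta}$ only tells you that $V_{\bar\beta}$ sees an embedding between ranks \emph{below} $\bar\beta$; it does not manufacture a cross-embedding $V_{\bar\beta}\to V_\beta$. The fallback, using the rank Berkeley embedding $i$ to ``absorb the gap,'' is a self-map of some large $V_{\beta'}$ with critical point below $\lambda$, and you offer no mechanism for converting it into the required $V_{\bar\beta}\to V_\beta$. Your closing claim that $i$ fixes $\xi$ and $\beta$ ``by minimality'' also fails: the minimal bad pair $(\xi,\beta)$ is defined relative to $\kappa$, and $i$, with $\crit(i) < \lambda \leq \kappa$, need not fix $\kappa$. (You also underestimate part \ref{item:super}: without choice, the passage from almost supercompact to supercompact is not routine; the paper must argue that the relevant embeddings $\pi : V_{\bar\alpha+1}\to V_{\alpha+1}$ with $\pi(\bar\eta)=\eta$ have critical point exactly $\bar\eta$, and this is where the hypothesis that no rank Berkeley lies below $\eta$ is actually used.)

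The paper supplies the missing idea: a sequence depending on $\xi$ alone, with no reference to $\kappa$. Set $\nu_0 = \xi$, and let $\nu_{i+1}$ be the least $\nu > \nu_i$ such that no $\bar\nu$ between $\xi$ and $\nu_i$ admits an embedding $V_{\bar\nu}\to V_\nu$ fixing $\xi$; the construction terminates at stage $\gamma_\xi$ when no such $\nu$ exists. Rank Berkeleyness at $\lambda$ forces $\gamma_\xi < \lambda$: otherwise take $j : V_{\nu_\lambda}\to V_{\nu_\lambda}$ with $\xi < \crit(j) = \eta < \lambda$; then $j$ fixes the sequence (it is definable from $\xi$), and $j\restriction V_{\nu_{\eta+1}}$ yields, after one application of elementarity, an embedding contradicting the definition of $\nu_{\eta+1}$. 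Now ``the sequence $(\nu_i)$ terminates'' is a first-order property of $\xi$ alone, so rank reflection at $\kappa$ locates $\bar\beta < \kappa$ with $V_{\bar\beta}$ computing $\gamma_\xi$ --- and hence the entire sequence --- correctly. The terminal ordinal $\delta = \nu_{\gamma_\xi - 1}$ is then below $\kappa$, and by construction every $V_\alpha$ with $\alpha \geq \delta$ receives an embedding from some $V_{\bar\alpha}$, $\xi < \bar\alpha < \delta$, fixing $\xi$. The division of labor is the point you were missing: rank Berkeley bounds the length of a sequence that never mentions $\kappa$, and rank reflection then only has to place that short computation below $\kappa$.
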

Justified by \cref{thm:dichotomy},
the results of the following sections are stated in terms of rank reflecting cardinals 
even though we will employ 
theorems from \cite{KunenMemorial} concerning almost supercompact cardinals.
    \begin{proof}[Proof of \cref{thm:dichotomy}]
        Since \ref{item:super} will not be needed in our applications, we will only sketch
        its proof.

        Clearly any cardinal that is either supercompact or a limit of
        supercompact cardinals is almost supercompact, so we focus on the converse.
        Suppose \(\xi\) is an ordinal and \(\eta = \eta_\xi\) is the least
        ordinal greater than \(\xi\) such that for all \(\beta \geq \eta\),
        there is some \(\bar \beta\) between \(\xi\) and \(\eta\) admitting
        an elementary embedding \(\pi : V_{\bar \beta} \to V_\beta\) such that
        \(\pi(\xi) = \xi\).
        Assuming there is no rank Berkeley cardinal less than \(\eta\),
        we will show that \(\eta\) is supercompact. Since any almost supercompact
        cardinal \(\kappa\) is the supremum of the set \(\{\eta_\xi : \xi < \kappa\}\),
        it will follow that any almost supercompact cardinal less than or equal to the
        least rank Berkeley cardinal is either a supercompact cardinal or a limit
        of supercompact cardinals.
        
        For all \(\delta < \eta\),
        for all sufficiently large ordinals \(\alpha\),
        there is no \(\bar \alpha\)
        between \(\xi\) and \(\delta\) admitting an 
        elementary embedding \(\pi : V_{\bar \alpha}\to V_\alpha\) such that
        \(\pi(\xi) = \xi\) and
        \(\delta\in \ran(\pi)\): otherwise, one can
        show that for all ordinals \(\beta\),
        there some \(\bar \beta\) between \(\xi\) and \(\delta\)
        admitting an 
        elementary embedding \(\pi : V_{\bar \alpha}\to V_\alpha\) such that \(\pi(\xi) = \xi\),
        contrary to the minimality of \(\eta\).

        Fix an ordinal \(\alpha\) large enough that 
        no cardinal less than \(\eta\) is rank Berkeley in \(V_\alpha\) and
        for all \(\delta < \eta\),
        there is no \(\bar \alpha\)
        between \(\xi\) and \(\delta\) admitting an 
        elementary embedding \(\pi : V_{\bar \alpha}\to V_{\alpha}\) fixing \(\xi\)
        with \(\delta\in \ran(\pi)\). 
        Suppose 
        \(\bar \eta < \bar \alpha < \eta\) and \(\pi :V_{\bar \alpha+1}\to V_{\alpha+1}\)
        is an elementary embedding with \(\pi(\bar \eta) = \eta\). We will show that \(\crit(\pi) = \bar \eta\).

        We first claim that \(\pi[\bar \eta]\subseteq \bar \eta\).
        Otherwise, let \(\bar \delta < \bar \eta\) be such that
        \(\pi(\bar \delta) > \bar \eta\). Let \(\delta = \pi(\bar \delta)\).
        Since \(\delta < \eta\), our choice of \(\alpha\) implies that \(\bar \alpha > \delta\).
        By elementarity, there is some \(\bar {\bar \alpha} < \bar \eta\)
        admitting an elementary \(\bar \pi : V_{\bar{\bar \alpha}}\to V_{\bar \alpha}\)
        with \(\bar \delta\in \ran(\pi)\). Now \(\bar{\bar \alpha} < \bar \eta < \delta\)
        and \(\pi\circ \bar \pi : V_{\bar{\bar \alpha}}\to V_\alpha\) is an elementary embedding
        with \(\delta\in \ran(\pi\circ \bar \pi)\), contrary to our choice of \(\alpha\).

        Assume towards a contradiction that \(\crit(\pi) < \bar\eta\).
        We will show that in \(V_\alpha\), there is a rank Berkeley cardinal less than \(\bar \eta\),
        contrary to our choice of \(\alpha\).
        Since \(\pi[\bar \eta]\subseteq \bar \eta\) while \(\pi(\bar \eta) = \eta\),
        \(\bar \eta\) must have uncountable cofinality.
        Let \(\lambda = \sup \{\eta,\pi(\eta),\pi(\pi(\eta)),\dots\}\), so 
        \(\lambda\) has countable cofinality, and hence \(\lambda < \bar \eta\).
        It is easy to see that \(\lambda\) is rank Berkeley in \(V_{\bar \eta}\):
        consider the least \(\alpha < \beta\) with \(\alpha < \lambda \leq \beta < \bar \eta\)
        such that there is no elementary embedding from \(V_\beta\) to itself with critical point
        between \(\alpha\) and \(\lambda\), and note that \(\pi(\alpha) = \alpha\) and \(\pi(\beta) = \beta\),
        and so \(\pi\restriction V_\beta\) contradicts the definition of \(\alpha\) and \(\beta\).
        By elementarity, \(\lambda\) is rank Berkeley in \(V_\eta\),
        and hence \(\lambda\) is rank Berkeley in \(V_{\bar \alpha}\).
        By elementarity, \(\lambda\) is rank Berkeley in \(V_\alpha\), and this is a contradiction.

        Turning to \ref{item:rank_refl}, the fact that
        almost supercompact cardinals are rank reflecting is immediate from the definition.
        Towards the converse, let us make some definitions and observations.

        Fix an ordinal \(\xi\). We define an increasing continuous sequence of ordinals
        \((\nu_i)_{i < \gamma_\xi}\) by letting \(\nu_0\) be \(\xi\) 
        and \(\nu_{i+1}\) the least
        ordinal \(\nu\) greater than \(\nu_i\) such that for all \(\bar \nu\) 
        between \(\xi\) and \(\nu_i\), there is no elementary embedding from
        \(V_{\bar \nu}\) into \(V_{\nu}\) that fixes \(\xi\).
        The process terminates if \(\nu_{i+1}\) does not exist,
        in which case \(\gamma_\xi = i + 1\).

        We claim that \(\gamma_\xi\) exists and is less than the least rank Berkeley cardinal
        \(\lambda\).
        To see this, assume not, and let
        \(j : V_{\nu_\lambda}\to V_{\nu_\lambda}\) be an elementary embedding
        fixing \(\xi\)
        whose critical point \(\eta\) is less than \(\lambda\). 
        Then \(j\restriction V_{\nu_{\eta+1}}\) witnesses that
        there is an elementary embedding from \(V_{\nu_{\eta+1}}\) to \(V_{\nu_{j(\eta)+1}}\)
        that fixes \(\xi\).
        By the elementarity of \(j\), for some \(\bar \nu\) between \(\xi\) and \(\nu_{\eta+1}\),
        there is an elementary embedding from \(V_{\bar \nu}\) into 
        \(V_{\nu_{\eta+1}}\) that fixes \(\xi\). This contradicts the definition
        of \(\nu_{\eta+1}\).

        Now suppose \(\kappa\geq \lambda\) is a rank reflecting cardinal above \(\xi\).
        Since \(\kappa\) is rank reflecting, there is some ordinal \(\beta < \kappa\)
        such that \(V_\beta\) correctly computes
        \(\gamma_\xi\). It follows that \(V_{\beta}\) correctly computes
        \((\nu_i)_{i < \gamma_\xi}\), and hence \(\delta = \nu_{\gamma_\xi - 1}\)
        is less than \(\kappa\). But \(\delta\) has the property that
        for all \(\alpha \geq \delta\), there is some \(\bar \alpha < \delta\)
        and an elementary embedding \(\pi : V_{\bar \alpha}\to V_\alpha\) such that
        fixing \(\xi\). Since \(\xi < \kappa\) was arbitrary, \(\kappa\) is almost supercompact.
    \end{proof}
\section{The number of ultrafilters on an ordinal}
If \(X\) is a set and \(\kappa\) is a cardinal, then
\(\beta_\kappa(X)\) denotes the set of \(\kappa\)-complete
ultrafilters on \(X\).
The main theorem of this section bounds the size of this set:
\begin{thm}\label{thm:uf_bound}
    Suppose \(\lambda\) is rank Berkeley, \(\kappa \geq \lambda\) is rank reflecting,
    and \(\epsilon \geq \kappa\) is an even ordinal. Then for all \(\eta < \theta_\epsilon\),
    \(|\beta_\kappa(\eta)| < \theta_\epsilon\).
\end{thm}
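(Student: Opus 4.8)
The plan is to mimic the proof of Corollary~\ref{cor:hod_bound}, but now working below $\theta_\epsilon$ and replacing the role of $V_{\epsilon+1}$-subsets by $\kappa$-complete ultrafilters on a fixed ordinal $\eta < \theta_\epsilon$. Fix $\eta < \theta_\epsilon$. Since $\epsilon \geq \kappa$ is even and $\lambda$ is rank Berkeley with $\kappa$ rank reflecting, Theorem~\ref{thm:dichotomy}\ref{item:rank_refl} gives us that $\kappa$ is almost supercompact, so we may invoke the machinery of \cite{KunenMemorial}. First I would argue that without loss of generality we may take $\epsilon$ itself to be, in an appropriate sense, closed under the relevant embeddings: using rank reflection of $\kappa$ and the fact that $\epsilon$ is a limit of ordinals above $\kappa$, reflect the statement ``$|\beta_\kappa(\eta)| \geq \theta$'' down so that it suffices to bound $|\beta_\kappa(\eta)|$ by $\theta_{\bar\epsilon}$ for cofinally many even $\bar\epsilon < \epsilon$ (note $\theta_\epsilon = \sup_{\bar\epsilon < \epsilon}\theta_{\bar\epsilon}$ only at limit $\epsilon$, so a little care is needed at even successor $\epsilon$; in that case one works directly with $\mathcal H_{\epsilon}$ and an embedding $j : V_{\epsilon+1}\to V_{\epsilon+1}$ obtained from rank Berkeley-ness of $\lambda \leq \kappa \leq \epsilon$).

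The core of the argument: assume toward a contradiction that there is an injection $\langle U_\alpha : \alpha < \theta_\epsilon\rangle$ of distinct $\kappa$-complete ultrafilters on $\eta$. Let $M = \HOD_{\eta, \vec U}$ — or better, following the pattern of Corollary~\ref{cor:hod_bound}, let $M = \HOD_{V_{\delta+1}, A}$ where $\delta$ is chosen so that $\eta \leq^* V_{\delta+1}$ and the sequence $\vec U$ is coded by a class $A$; the point is that each $U_\alpha \subseteq P(\eta) \subseteq V_{\delta+2}$, so $\vec U$ amounts to a $\theta_\epsilon$-sequence of distinct subsets of some fixed $V_{\delta+1}$ with $\delta$ even and $\delta < \epsilon$. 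Then Theorem~\ref{thm:hartogs} (applicable since $\lambda$ rank Berkeley gives an elementary embedding from $V_{\delta+3}$ to itself, $\delta$ being even) yields $\aleph(V_{\delta+2}) = \theta_{\delta+2} \leq \theta_\epsilon$, so there can be no such sequence of length $\theta_\epsilon > \theta_{\delta+2}$ — provided we chose $\delta$ large enough that $\theta_{\delta+2} < \theta_\epsilon$, which is possible exactly when $\epsilon$ is a limit of even ordinals, i.e. a limit ordinal. For $\epsilon$ an even successor we cannot squeeze $\delta$ strictly below, and there the argument must instead directly build the $M$-extender derived from an embedding $j : V_{\epsilon+1}\to V_{\epsilon+1}$ as in Corollary~\ref{cor:hod_bound} and apply Theorem~\ref{thm:strong_suzuki} to $\epsilon$, using $\kappa$-completeness of the $U_\alpha$ to ensure $\crit(j)$ can be taken below $\kappa \leq$ the relevant index and that $j$ moves the enumeration generically enough.

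The place where $\kappa$-completeness genuinely earns its keep, and which I expect to be the main obstacle, is showing that the sequence $\vec U$ (or its $\HOD$-reduct) is \emph{moved} by the embedding in a controlled way — specifically, that $j[\theta_\epsilon]$ becomes definable over $\mathcal H_{\epsilon}$ (or $\mathcal H_{\epsilon+2}$) from $j[V_\gamma]$ for some $\gamma < \epsilon$ together with a parameter in $\ran(j)$, so that Theorem~\ref{thm:strong_suzuki} applies. In Corollary~\ref{cor:hod_bound} this came from the canonical extension $j^+$ acting on $P(\eta)\cap M$; here the analogue is that $j$ acts on $\beta_\kappa(\eta)$, and one must check that $j \restriction \beta_\kappa(\eta)$ is definable from $j[\eta]$ (equivalently from $j[V_\gamma]$ with $\eta \leq^* V_\gamma$) — this uses that a $\kappa$-complete ultrafilter on $\eta$, with $\kappa$ above the critical point, is determined by its action via $j$ in a first-order way, together with the coding lemma (Theorem~\ref{thm:coding}) to get that $\beta_\kappa(\eta)$ lies inside a single elementary pointclass and hence $\beta_\kappa(\eta) \leq^* V_{\epsilon+1}$ inside the relevant $\HOD$. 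Granting that, the derived extender/ultrapower computation reproduces $j \restriction \theta_\epsilon$ as in Corollary~\ref{cor:hod_bound}, and Theorem~\ref{thm:strong_suzuki} delivers the contradiction $\theta_\epsilon < \theta_\epsilon$.
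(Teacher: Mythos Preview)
Your limit case is essentially fine (and in fact simpler than you make it: once $\eta\leq^* V_\alpha$ with $\alpha+3<\epsilon$, the wellorderability of $\beta_\kappa(\eta)$ from \cref{thm:uf_wo} already gives $|\beta_\kappa(\eta)|<\aleph(V_{\alpha+2})\leq\theta_{\alpha+3}<\theta_\epsilon$; you do not need \cref{thm:hartogs}). The real content of the theorem is the even \emph{successor} case, and there your proposal has a genuine gap.

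The step you flag as ``the place where $\kappa$-completeness earns its keep'' is exactly where the argument breaks. You want $j\restriction\beta_\kappa(\eta)$ to be definable from $j[V_\gamma]$ for some $\gamma<\epsilon$, but an ultrafilter $U\in\beta_\kappa(\eta)$ is a subset of $P(\eta)$, and with $\eta$ close to $\theta_\epsilon$ one only has $P(\eta)\leq V_\epsilon$; so $U\in\mathcal H_{\epsilon+1}$, and computing $j(U)$ requires $j$ on $\mathcal H_{\epsilon+1}$, hence essentially $j\restriction V_\epsilon$. Since $\epsilon$ is even, \cref{thm:periodicity} tells you $j\restriction V_\epsilon$ is \emph{not} recoverable from $j[V_{\epsilon-1}]$. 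Neither $\kappa$-completeness nor the coding lemma helps here: the coding lemma concerns relations on $V_{\epsilon+1}$ indexed by ordinals below $\theta_{\epsilon+2}$, not the definability of $j$ on objects living at rank $\epsilon$. So the analogy with \cref{cor:hod_bound} fails precisely because there the objects $P(\eta)\cap M$ were surjective images of $V_{\epsilon+1}$ inside $M$, a fact supplied by \cref{thm:hod_cor}, whereas you have no such handle on $\beta_\kappa(\eta)$ without already knowing its size.

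The paper takes a different route. The key new ingredient is \cref{lma:od_basis}: using the Ketonen rank on $\beta_\kappa(\eta)$ together with results from \cite{KunenMemorial}, one builds an $\OD_{V_{\epsilon-1},\varphi}$ wellordered family $\mathscr F$ of filter bases such that every $U\in\beta_\kappa(\eta)$ extends some $B\in\mathscr F$ and each $B$ has fewer than $\lambda$ extensions. Since $\mathscr F$ lives in $M=\HOD_{V_{\epsilon-1},\varphi}$ as a wellordered family of subsets of $P(\eta)$, and $P(\eta)\leq^* V_{\epsilon-1}$ in $M$ by \cref{thm:hod_cor}, one gets $|\mathscr F|<\theta_{\epsilon+1}^M<\theta_\epsilon$ via \cref{cor:hod_bound}. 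Then $|\beta_\kappa(\eta)|\leq|\mathscr F|\cdot\lambda<\theta_\epsilon$. The point is that the Ketonen-rank basis replaces your undefined ``$j\restriction\beta_\kappa(\eta)$'' with an ordinal-indexed object to which the $\HOD$ machinery genuinely applies.
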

This should be contrasted with the following fact \cite[Theorem 4.14]{KunenMemorial},
which combined with \cref{thm:woc} implies that \(\beta_\kappa(\eta)\) is quite large:
\begin{thm}\label{thm:filter_extension}
    Suppose \(\lambda\) is rank Berkeley
    and \(\kappa \geq \lambda\) is rank reflecting. Then 
    every \(\kappa\)-complete filter on an ordinal extends to a \(\kappa\)-complete ultrafilter.\qed
\end{thm}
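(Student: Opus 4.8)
The plan is to reduce, by way of \cref{thm:dichotomy}, to a statement about almost supercompact cardinals, and then to run the classical derivation of filter extension from strong compactness in a form that avoids the Axiom of Choice. Since \(\kappa \geq \lambda\) is rank reflecting, \cref{thm:dichotomy} gives that \(\kappa\) is almost supercompact; this is the only use of the hypotheses on \(\lambda\) and \(\kappa\). So let \(\kappa\) be almost supercompact, and let \(F\) be a \(\kappa\)-complete filter on an ordinal \(\eta\); we seek a \(\kappa\)-complete ultrafilter \(U \supseteq F\).

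The heart of the argument, which is the content of \cite[Theorem~4.14]{KunenMemorial}, is that an almost supercompact \(\kappa\) is \emph{strongly compact} in the appropriate choiceless sense: for every set \(X\) there is a fine \(\kappa\)-complete ultrafilter on \([X]^{<\kappa}\), the set of subsets of \(X\) of cardinality less than \(\kappa\). (Here \emph{fine} means that \(\{\sigma : x \in \sigma\}\) has measure one for each \(x \in X\).) I expect this to be the main obstacle: it must be extracted from the bare reflection property in the definition of almost supercompactness, by upgrading the reflections \(\pi : V_{\bar\beta} \to V_\beta\) witnessing almost supercompactness of \(\kappa\) to embeddings that genuinely witness the \(X\)-supercompactness of \(\kappa\) --- concretely, elementary \(j : V_\gamma \to N\) (for some \(\gamma\) with \(X \in V_\gamma\) and some transitive \(N\)) with \(\crit(j) = \kappa\) and \(j[X] \in N\), from which a fine \(\kappa\)-complete ultrafilter on \([X]^{<\kappa}\) is read off in the usual way using the seed \(j[X]\).

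Granting this, apply it with \(X = F\) to obtain a fine \(\kappa\)-complete ultrafilter \(W\) on \(S = [F]^{<\kappa}\), and define \(U\) by averaging \(F\) against \(W\). The key point that keeps this choice-free is that \(F\) is a filter on an \emph{ordinal}: for \(\sigma \in S\) the set \(\bigcap \sigma\) lies in \(F\) by \(\kappa\)-completeness (with the convention \(\bigcap \emptyset = \eta\)), hence is a nonempty set of ordinals, so we may canonically set \(x_\sigma = \min \bigcap \sigma\) with no appeal to choice. Put
\[ U = \{ A \subseteq \eta : \{\sigma \in S : x_\sigma \in A\} \in W \}. \]
Then \(U\) is an ultrafilter on \(\eta\), since for each \(A\) the sets \(\{\sigma : x_\sigma \in A\}\) and \(\{\sigma : x_\sigma \notin A\}\) partition \(S\), and \(U\) is \(\kappa\)-complete because \(W\) is and \(\{\sigma : x_\sigma \in \bigcap_{i<\delta} A_i\} = \bigcap_{i<\delta}\{\sigma : x_\sigma \in A_i\}\) for \(\delta < \kappa\). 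Finally \(U \supseteq F\): if \(A \in F\) then \(\{\sigma : A \in \sigma\} \in W\) by fineness, and \(A \in \sigma\) forces \(x_\sigma \in \bigcap \sigma \subseteq A\), so \(\{\sigma : A \in \sigma\} \subseteq \{\sigma : x_\sigma \in A\}\), whence \(A \in U\). This completes the reduction; the one substantial ingredient left open is the choiceless strong compactness of almost supercompact cardinals, which I would take from \cite{KunenMemorial}.
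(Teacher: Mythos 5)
The paper does not actually prove this theorem: it is quoted verbatim from \cite[Theorem 4.14]{KunenMemorial}, which is why the statement appears with no proof attached. So the only ``proof'' to compare against is a citation. Your opening move --- reducing to almost supercompactness via part (2) of \cref{thm:dichotomy} --- is exactly the translation the paper intends (it says explicitly that results are stated for rank reflecting cardinals while the imported theorems concern almost supercompact ones), and your final averaging step, pushing a fine \(\kappa\)-complete ultrafilter \(W\) on \([F]^{<\kappa}\) forward along \(\sigma\mapsto\min\bigcap\sigma\), is correct and genuinely choice-free precisely because \(F\) is a filter on an \emph{ordinal}.

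The gap is the ingredient you defer, and it is not a routine citation. First, \cite[Theorem 4.14]{KunenMemorial} \emph{is} the filter extension property, not a statement about fine ultrafilters on \([X]^{<\kappa}\); as written, your argument assumes something at least as strong as the theorem being proved. Second, the route you sketch for obtaining \(W\) --- an embedding \(j\) with critical point \(\kappa\) and seed \(j[F]\) --- provably cannot work unless \(F\) is wellorderable: for \(j[F]\) to be a point of \(j([F]^{<\kappa})\), the target model must see an injection of \(j[F]\) into an ordinal below \(j(\kappa)\), and any such injection pulls back along \(j\) to a wellordering of \(F\) of ordertype less than \(\kappa\). But a \(\kappa\)-complete filter on an ordinal \(\eta\) is in general a badly non-wellorderable subset of \(P(\eta)\) (the principal filter generated by \(B\subseteq\eta\) is in bijection with \(P(\eta\setminus B)\), and the entire thrust of this paper is that \(P(\eta)\) is far from wellorderable in this regime). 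So the classical strong-compactness derivation breaks at exactly the point where the Axiom of Choice used to enter, and replacing \([|F|]^{<\kappa}\) by \([F]^{<\kappa}\) does not rescue it. The actual argument in \cite{KunenMemorial} proceeds by different means --- the wellordered collection lemma (\cref{thm:woc}) and the theory of the Ketonen order on \(\kappa\)-complete ultrafilters on ordinals, the same machinery this paper imports elsewhere --- and that is what you would need to reconstruct; the honest short proof here is simply to cite \cite[Theorem 4.14]{KunenMemorial} for the statement as given.
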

In the context of ZFC, if 
every \(\kappa\)-complete filter on an ordinal extends to a \(\kappa\)-complete ultrafilter,
then \(\kappa\) is strongly compact, and in particular
for all cardinals \(\eta\) of cofinality at least \(\kappa\),
\(|\beta_\kappa(\eta)| = 2^{2^\eta}\). \cref{thm:uf_bound} tells a very different story.

The tendency of sufficiently complete ultrafilters on ordinals to behave like ordinals
themselves is the key to the results of this section.
The following theorem \cite[Lemma 3.6]{KunenMemorial} is one example of this behavior:
\begin{thm}\label{thm:uf_wo}
    Suppose \(\lambda\) is rank Berkeley and \(\kappa \geq \lambda\) is rank reflecting.
    Then for all \(\eta\geq \kappa\), \(\beta_\kappa(\eta)\) is wellorderable.\qed
\end{thm}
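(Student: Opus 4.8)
The plan is to prove the stronger statement that every $\kappa$-complete ultrafilter on $\eta$ is ordinal definable. Since the ordinal definable sets carry a definable wellorder, this yields $\beta_\kappa(\eta)\subseteq\OD$ and hence the wellorderability of $\beta_\kappa(\eta)$.

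First I would record the structure of the ultrapower of a non-principal $\kappa$-complete ultrafilter $U$ on $\eta$ (principal ultrafilters being ordinal definable from their generators). Since $U$ is $\kappa$-complete and $\eta\geq\kappa$, any $f\colon\eta\to\delta$ with $\delta<\kappa$ agrees $U$-almost everywhere with a constant below $\delta$, by $\delta^+$-completeness, so $[f]_U<\delta$; hence $\crit(j_U)\geq\kappa$ and $j_U\restriction V_\kappa$ is the identity. Moreover $\kappa$-completeness implies countable completeness, so $M_U=\Ult(V,U)$ is wellfounded, and we identify it with its transitive collapse.

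Next I would reduce to the ordinal definability of the restricted embedding $E_U=j_U\restriction V_{\eta+1}$. For $X\subseteq\eta$ one has $X\in U$ if and only if $[\mathrm{id}_\eta]_U\in E_U(X)$, so $U$ is recovered from the pair $(E_U,a_U)$ where $a_U=[\mathrm{id}_\eta]_U$. The seed $a_U$ is an ordinal below $j_U(\eta)$, and $j_U(\eta)$ is the surjective image of ${}^\eta\eta$ under $f\mapsto[f]_U$; since ${}^\eta\eta$ injects into $V_{\eta+1}$, this gives $j_U(\eta)<\aleph^*(V_{\eta+1})=\theta_{\eta+2}$. So every seed lies in the fixed ordinal $\theta_{\eta+2}$, and for each fixed function $E$ there are at most $\theta_{\eta+2}$-many ultrafilters of the form $\{X\subseteq\eta:a\in E(X)\}$. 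Consequently, if the set $\{E_U:U\in\beta_\kappa(\eta)\}$ is the surjective image of an ordinal, then so is $\beta_\kappa(\eta)$, and $\beta_\kappa(\eta)$ is wellorderable; thus it suffices to prove that each $E_U$ is ordinal definable. (One could equally work with $j_U\restriction V_{\eta+2}$ in place of $E_U$, using that every ordinal below $\theta_{\eta+2}$ is the rank of an element of $V_{\eta+1}$ in some wellfounded relation on $V_{\eta+1}$ and transporting ranks, exactly as in the proof of \cref{cor:hod_bound}.)

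Finally I would prove that $E_U$ is ordinal definable, which I expect to be the main obstacle. Here I would invoke \cref{thm:dichotomy}: a rank reflecting cardinal above the least rank Berkeley cardinal is almost supercompact, and because there is a proper class of rank reflecting cardinals there is an almost supercompact cardinal $\kappa'>\eta$. The intended argument is a minimality-and-reflection argument in the style of \cref{section:periodicity} and of the proof of \cref{thm:dichotomy}: supposing some $E_U$ fails to be ordinal definable, one isolates a least ordinal witnessing such a failure, notes that this least witness is a fixed point of every elementary embedding one can apply to it, and uses a self-embedding of a sufficiently tall $V_\beta$ — produced from the almost supercompact cardinal above $\eta$, or ultimately from a rank Berkeley cardinal — to contradict its minimality, reading $E_U$ off from the embedding. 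The crux is the bookkeeping: an embedding fixing $\eta$ fixes $V_{\eta+1}$ and so acts trivially on $E_U$, so the reflection has to be carried through embeddings that move $\eta$, and one must track the embedding's action on $U$, on $j_U$, and on $\crit(j_U)$ precisely enough that the minimal witness is genuinely contradicted rather than merely replaced by another, incomparable counterexample. Granting this, every $E_U$ is ordinal definable, so $\beta_\kappa(\eta)\subseteq\OD$ and $\beta_\kappa(\eta)$ is wellorderable.
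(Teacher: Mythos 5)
Your proposal has a genuine gap at its central step. The reduction is fine as far as it goes (modulo the ZF subtlety that wellfoundedness of \(\Ult(V,U)\) for a \(\kappa\)-complete \(U\) is not automatic without some choice --- note that \cref{lma:finest_partition} is careful to hypothesize \(\kappa\)-\emph{wellfoundedness} rather than \(\kappa\)-completeness), but the entire content of the theorem is concentrated in the claim that each \(E_U\) is ordinal definable, and that claim is never proved. You describe the shape of a minimality-and-reflection argument, explicitly identify the obstruction --- an embedding fixing the relevant data tells you nothing, while an embedding moving \(\eta\) carries a putative least counterexample to another counterexample rather than contradicting it --- and then write ``granting this.'' That obstruction is precisely the hard part, and no mechanism is given for reading \(E_U\) off from a self-embedding of some tall \(V_\beta\). (The parenthetical claim that an embedding fixing \(\eta\) ``acts trivially on \(E_U\)'' is also false as stated: \(j(\eta)=\eta\) does not make \(j\) the identity on \(V_{\eta+1}\) or on subsets of it.) Note moreover that OD-ness of every \(U\in\beta_\kappa(\eta)\) is essentially equivalent to the desired conclusion --- a definable wellorder of \(\beta_\kappa(\eta)\) makes each element OD from its rank, and conversely --- so the reduction buys very little; the problem has been restated, not solved.

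For comparison: the paper does not prove this theorem either; it quotes it from \cite{KunenMemorial}. The proof there goes through the Ketonen order on countably complete ultrafilters on ordinals, which is wellfounded in ZF; the filter extension property (\cref{thm:filter_extension}) is what shows it is \emph{linear} on \(\beta_\kappa(\eta)\), so that Ketonen rank injects \(\beta_\kappa(\eta)\) into the ordinals. The present paper's \cref{lma:od_basis} implicitly relies on exactly this machinery (``if \(U\in\beta_\kappa(\eta)\) has Ketonen rank \(\xi\)\dots''). If you want a self-contained argument, that linearity proof --- not a minimality/reflection argument --- is where the large cardinal hypothesis actually does its work.
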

In \cref{thm:even}, we will use \cref{thm:filter_extension} and \cref{thm:uf_wo} in concert
to get a handle on \(\kappa\)-complete filters on ordinals, but for our purposes 
one could do without \cref{thm:filter_extension}
by uniformly replacing \(\beta_\kappa(\eta)\) 
with a fixed wellorderable set \(S\) of ultrafilters such that every \(\kappa\)-complete
filter extends to an ultrafilter in \(S\). The existence of such a set \(S\) is
much easier to prove than \cref{thm:filter_extension}. 

\begin{lma}\label{lma:od_basis}
    Suppose \(\lambda\) is rank Berkeley, \(\kappa \geq \lambda\) is rank reflecting,
    and \(\epsilon \geq \kappa\) is an even ordinal. Then for all \(\eta < \theta_{\epsilon+2}\)
    and all surjections \(\varphi : V_{\epsilon+1}\to \eta\), 
    there is a wellordered sequence \(\mathscr F\in \HOD_{V_{\epsilon+1},\varphi}\) of 
    subsets of \(P(\eta)\)
    with the following properties:
    \begin{enumerate}[(1)]
        \item Every \(U\in \beta_\kappa(\eta)\) extends some \(B\in \mathscr F\).\label{item:basis}
        \item For each \(B\in \mathscr F\), \(\left|\{ U\in \beta_\kappa(\eta) : B\subseteq U\}\right| < \lambda\).\label{item:small}
    \end{enumerate}
    \begin{proof}
        For each \(\alpha\in \Ord\),
        let \(\mathcal E_\alpha\) denote the set of elementary embeddings 
        from \(V_{\alpha}\) to itself.
        For \(\xi < \theta_{\epsilon+3}\), let 
        \[B_\xi = \{\ran(j)\cap \eta : j\in \mathcal E_{\epsilon+3},\varphi\in j[\mathcal H_{\epsilon+2}],j(\xi)= \xi\}\]
        
        We let \(\mathscr F = \langle B_\xi\rangle_{\xi < \theta_{\epsilon+3}}\).
        Note that \(\mathscr F\) is \(\OD_\varphi\). Also, each \(B_\xi\) is included in
        \(\HOD_{V_{\epsilon+1},\varphi}\): for \(j\in \mathcal E_{\epsilon+3}\) with \(\varphi\in j[\mathcal H_{\epsilon+2}]\),
        \(\ran(j)\cap \eta = \ran(\varphi\circ j^+)\), and \(j^+\) is definable over \(V_{\epsilon+1}\) by \cref{thm:periodicity}.
        It follows that \(\mathscr F\in \HOD_{V_{\epsilon+1},\varphi}\).

        By the proof of \cite[Lemma 4.6]{KunenMemorial},
        if \(U\in \beta_\kappa(\eta)\) has Ketonen rank \(\xi\),
        then \(B_\xi\subseteq U\). This shows \ref{item:basis}.

        For any \(j: V_{\epsilon+4}\to V_{\epsilon+4}\),
        \(B_\xi\) belongs to the normal fine ultrafilter on \(P(\eta)\) derived
        from \(j\) using \(j[\eta]\). 
        By \cite[Theorem 3.12]{KunenMemorial},
        it follows that for each \(\xi < \theta_{\epsilon+3}\),
        there is a partition \(\mathcal P\) of \(\eta\)
        with \(|\mathcal P| < \lambda\) 
        such that for each \(A\in \mathcal P\),
        \(B_\xi\cup \{A\}\) extends uniquely to an element of \(\beta_\kappa(\eta)\).
        As a consequence, \(\left|\{ U\in \beta_\kappa(\eta) : B\subseteq U\}\right| < \lambda\),
        establishing \ref{item:small}.
    \end{proof}
\end{lma}

\begin{proof}[Proof of \cref{thm:uf_bound}]
    Fix a family \(\mathscr F\) as in \cref{lma:od_basis}.
    In \(M = \HOD_{V_{\epsilon+1},\varphi}\),
    \(\mathscr F\) is a wellordered family of subsets of
    \(P(\eta)\) and \(P(\eta)\leq^* V_{\epsilon+1}\) by \cref{thm:hod_cor}.
    Therefore \(|\mathscr F| < \theta_{\epsilon+3}^M\).
    By \cref{cor:hod_bound}, \(\theta_{\epsilon+3}^M < \theta_{\epsilon+2}\).
    Fix an enumeration \(\langle B_\alpha\rangle_{\alpha < |\mathscr F|}\)
    of \(\mathscr F\).
    For \(\alpha < |\mathscr F|\), let \(\mathscr U_\alpha = \{U\in \beta_\kappa(\eta) : B_\alpha\subseteq U\}\),
    so that \(|\mathscr U_\alpha| < \lambda\) for all \(\alpha < |\mathscr F|\).
    Then \(\beta_\kappa(\eta) = \bigcup_{\alpha < |\mathscr F|}\mathscr U_\alpha\),
    and so  \(|\beta_\kappa(\eta)| \leq |\mathscr F|\cdot \lambda < \theta_{\epsilon+2}\).
\end{proof}
\section{The even levels}
If \(U\) is an ultrafilter on a set \(X\)
and \(h\) and \(g\) are functions on \(X\), write \(h \sqsubseteq_U g\) if
there is a function \(e\) such that
\(h(x) = e \circ g(x)\) for \(U\)-almost all \(x\in X\). 

Suppose \(\lambda\) is rank Berkeley, \(\kappa\geq \lambda\) is 
rank reflecting, and \(\eta\geq \kappa\) is an ordinal.
Our first lemma allows us to code any \(\kappa\)-wellfounded ultrafilter
on \(\eta\) as a pair \((D,F)\) where \(D\) 
is an ultrafilter on an ordinal less than \(\kappa\)
and \(F\) is a \(\kappa\)-complete filter.

\begin{lma}\label{lma:finest_partition}
    Suppose \(\lambda\) is rank Berkeley, \(\kappa\geq \lambda\) is 
    rank reflecting,
    and \(U\) is a \(\kappa\)-wellfounded ultrafilter on an ordinal \(\eta\). 

    \begin{enumerate}[(1)]
        \item There is a \(\sqsubseteq_U\)-maximal function \(g\)
        among all functions from \(\eta\) to \(\nu\) with \(\nu < \kappa\).\label{item:finest}
        \item Letting \(D = h_*(U)\) and \(k : \Ult(P(\eta),D)\to \Ult(P(\eta),U)\) be the factor embedding,
    the \(\Ult(P(\eta),D)\)-ultrafilter \(\mathcal B\) derived from \(k\) using \(\id_U\) generates
    a \(\kappa\)-complete filter.\label{item:factor}
    \end{enumerate}
\end{lma}

We will use the wellordered collection lemma \cite[Corollary 2.22]{KunenMemorial}.
\begin{thm}\label{thm:woc}
    Suppose \(\lambda\) is rank Berkeley and \(\kappa\geq \lambda\) is rank reflecting.
    Then for any family \(\mathcal F\) of nonempty sets with \(|\mathcal F| < \kappa\),
    there is a sequence \(\langle a_x : x\in V_\kappa\rangle\) such that
    for all \(A\in \mathcal F\), there is some \(x\in V_\kappa\) with \(a_x\in A\).\qed
\end{thm}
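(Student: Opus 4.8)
The plan is to reflect $\mathcal F$ to a rank‑initial‑segment below $\kappa$ via an elementary embedding, and then transfer the choices made downstairs back upstairs through that embedding. The key input I would isolate is the following strong structural‑reflection property of $\kappa$: for every set $X$, every ordinal $\beta \geq \kappa$ with $X \in V_\beta$, and every $\mu < \kappa$, there are an ordinal $\bar\beta < \kappa$ and an elementary embedding $\pi \colon V_{\bar\beta} \to V_\beta$ such that $X \in \ran(\pi)$ and $\crit(\pi) > \mu$. By \cref{thm:dichotomy}, $\kappa$ is almost supercompact, and I would derive the reflection property from this along the lines of \cite{KunenMemorial}: apply the embeddings supplied by almost supercompactness (using the rank Berkeley cardinal to guarantee embeddings of arbitrarily large rank‑initial‑segments) to the statement asserting the existence of such a $\pi$, and pull back along elementarity.

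Granting the reflection property, the rest is short. Since $|\mathcal F| < \kappa$, enumerate $\mathcal F = \langle A_\xi : \xi < \mu \rangle$ with $\mu < \kappa$ and each $A_\xi$ nonempty, and fix $\beta \geq \kappa$ with $\mathcal F \in V_\beta$. Applying the reflection property with $X = \mathcal F$ yields $\bar\beta < \kappa$ and elementary $\pi \colon V_{\bar\beta} \to V_\beta$ with $\mathcal F \in \ran(\pi)$ and $\crit(\pi) > \mu$; write $\mathcal F = \pi(\bar{\mathcal F})$ with $\bar{\mathcal F} \in V_{\bar\beta}$. Because $\crit(\pi) > \mu$, the map $\pi$ is the identity on $\mu + 1$, so $\bar{\mathcal F} = \langle \bar A_\xi : \xi < \mu\rangle$ with $\pi(\bar A_\xi) = A_\xi$ for all $\xi < \mu$; by elementarity each $\bar A_\xi$ is nonempty, and since $\bar A_\xi \in V_{\bar\beta}$ every member of $\bar A_\xi$ lies in $V_{\bar\beta} \subseteq V_\kappa$.

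Now set $a_x = \pi(x)$ for $x \in V_{\bar\beta}$ and $a_x = x$ for $x \in V_\kappa \setminus V_{\bar\beta}$. Given $A \in \mathcal F$, say $A = A_\xi$, choose any $\bar b \in \bar A_\xi$; then $\bar b \in V_{\bar\beta} \subseteq V_\kappa$ and $a_{\bar b} = \pi(\bar b) \in \pi(\bar A_\xi) = A_\xi = A$, as desired. This already reveals where the hypothesis is used: if every member of $\mathcal F$ had an element of rank below $\kappa$ then the identity sequence $\langle x : x \in V_\kappa\rangle$ would suffice, so the genuine content is the case in which the members of $\mathcal F$ live high in the cumulative hierarchy --- exactly the case handled by the reflection property.

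The main obstacle will be the reflection property itself. Almost supercompactness, as stated, only produces embeddings $V_{\bar\beta} \to V_\beta$ fixing a prescribed ordinal; the work lies in upgrading this to produce, in addition, an embedding whose range contains an arbitrary prescribed set $X$ while its critical point stays above $\mu$ and its domain stays below $\kappa$. It is in establishing this upgrade that the rank Berkeley cardinal (supplying a rich stock of embeddings to bootstrap from) and the rank reflection of $\kappa$ are both genuinely needed.
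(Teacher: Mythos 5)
This theorem is stated in the paper with a \(\qed\) and no argument: it is imported wholesale from \cite[Corollary 2.22]{KunenMemorial}, so there is no in-paper proof to compare against. Your reduction is correct as far as it goes, and it is the standard route: granting your ``reflection property,'' the transfer step is sound and genuinely choice-free --- you never select an element of \(\bar A_\xi\), you only use that \emph{every} element of \(\bar A_\xi\) lies in \(V_{\bar\beta}\subseteq V_\kappa\) and is sent into \(A_\xi\) by \(\pi\), which is exactly why the conclusion indexes the sequence by \(V_\kappa\) rather than by an ordinal. You are also right that \(\crit(\pi)>\mu\) (not merely \(\pi(\mu)=\mu\)) is essential, since otherwise \(\pi[\bar\mu]\) could be a proper subset of \(\mu\) and some \(A_\xi\) would never be hit.

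The caveat is that essentially all of the mathematical content now sits inside the reflection property, which you assert rather than prove. Note that it is strictly stronger than the definition of almost supercompact used in this paper, which only supplies embeddings \(\pi\colon V_{\bar\beta}\to V_\beta\) fixing a single prescribed ordinal \(\xi\); upgrading this to an embedding with \(\crit(\pi)>\mu\) \emph{and} an arbitrary prescribed set \(X\) in its range is the Magidor-style L\"owenheim--Skolem form of almost supercompactness, and deriving it from the single-ordinal form (using the rank Berkeley cardinal to supply a stock of embeddings with large critical point, as in the proof of \cref{thm:dichotomy}) is a genuine argument, not a routine elementarity pull-back. Since the paper itself outsources the whole theorem to \cite{KunenMemorial}, deferring this step to the same source is defensible, but as a self-contained proof your write-up has its one real gap exactly there, and you should either prove the reflection property or cite it precisely.
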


    \begin{proof}[Proof of \cref{lma:finest_partition}]
        We may assume \(\kappa\) has cofinality \(\omega\): otherwise \(\kappa\) is a limit of 
        rank reflecting cardinals of cofinality \(\omega\), 
        and one obtains the result by applying the lemma to these
        smaller cardinals combined with a simple regressive function argument. We omit the details since 
        in our applications, we will only need the special case of the lemma in which 
        \(\kappa\) is the least rank reflecting
        cardinal greater than or equal to \(\lambda\).

        We begin with \ref{item:finest}. Using the wellordered collection lemma (\cref{thm:woc}), fix functions
        \(\langle f_x : x\in V_\kappa\rangle\) from \(\eta\) to \(\kappa\) 
        such that for each \(\alpha < \kappa\),
        there is some \(x\in V_\kappa\) such that \(\alpha = [f_x]_U\).
        Then define \(g : \eta \to \kappa^{V_\kappa}\) by
        \(g(\xi) = \langle f_x(\xi) : x\in V_\kappa\rangle\).

        For any \(\nu < \kappa\) and \(h : \eta\to \nu\), 
        there is some \(x\in V_\kappa\) such that for \(U\)-almost all \(\xi < \eta\),
        \(h(\xi) = f_x(\xi) = \text{ev}_x\circ g(\xi)\) where \(\text{ev}_x : g[\eta]\to \kappa\) is given by
        \(\text{ev}_x(s) = s(x)\). To finish, it suffices to show that there is some \(A\in U\) such that \(|g[A]| < \kappa\).

        We claim \(\aleph(V_{\kappa+1}) = \kappa^+\):
        \cite[Theorem 3.13]{KunenMemorial} implies that \(\kappa^+\) is measurable, 
        but the proof that successor cardinals cannot be measurable under 
        the Axiom of Choice shows that
        if a set \(X\) carries a nonprincipal ultrafilter that is closed under \(Y\)-indexed
        intersections, then there is no injection from \(X\) to \(P(Y)\);
        since \(\kappa\) is rank reflecting, \(\kappa = \theta_\kappa\),
        and so any \(\kappa^+\)-complete ultrafilter on \(\kappa^+\)
        is closed under \(V_\kappa\)-indexed intersections by \cite[Lemma 3.5]{KunenMemorial}.
        
        Since \(\aleph(V_{\kappa+1}) = \kappa^+\), 
        \(|g[\eta]| \leq \kappa\). Since \(\cf(\kappa) = \omega\) and \(U\) is countably complete,
        \(|g[A]| < \kappa\) for some \(A\in U\).

        We now turn to \ref{item:factor}. Fix \(g\) as in \ref{item:finest}.
        We will use the fact that if \(f : \nu\to P(\eta)\), then
        \([f]_D\in \mathcal B\) if and only if for \(U\)-almost every \(\xi < \eta\),
        \(\xi\in f\circ g(\xi)\).
        
        Suppose \(\gamma < \kappa\) and \(\langle A_\beta \rangle_{\beta < \gamma}\) belong
        to the filter generated by \(\mathcal B\).
        We will show that there is a set \(A\in \mathcal B\) such that
        \(A\subseteq \bigcap_{\xi < \gamma} A_\xi\). 
        Applying the wellordered collection lemma (\cref{thm:woc}), let \(\langle f_x: x\in V_\kappa\rangle\) be functions 
        representing sets in \(\mathcal B\) such that for all \(\beta < \gamma\), there is some \(x\in V_\kappa\)
        such that \([f_x]_D\subseteq A_\beta\).

        Let \(h : \eta\to V_{\kappa+1}\) be the function 
        \[h(\xi) = \{x\in V_\kappa : \xi \in f_x\circ g(\xi)\}\]
        For all \(x\in V_\kappa\), the fact that \([f_x]_D\in \mathcal B\) implies that
        for \(U\)-almost all \(\xi < \eta\),
        \(x\in h(\xi)\).
        Since \(\aleph(V_{\kappa+1}) = \kappa^+\), \(|h[\eta]| \leq \kappa\), and so 
        since \(g\) is \(\sqsubseteq_U\)-maximal, there is some \(e : \nu\to V_{\kappa+1}\) such that
        \(e\circ g(\xi) = h(\xi)\) for \(U\)-almost all \(\xi\).
        For all \(x\in V_\kappa\), since
        \(x\in h(\xi)\) for \(U\)-almost all \(\xi < \eta\),
        \(x\in e(\alpha)\) for \(D\)-almost all \(\alpha < \kappa\).

        Define \(f : \kappa\to P(\eta)\) by setting
        \[f(\alpha) = \bigcap_{x\in e(\alpha)}f_x(\alpha)\]
        and let \(A = [f]_D\).
        For all \(x\in V_\kappa\), for \(D\)-almost all \(\alpha < \kappa\), 
        \(f(\alpha)\subseteq f_x(\alpha)\), and hence \(A\subseteq [f_x]_D\).
        This means \(A\subseteq \bigcap_{\beta < \gamma} A_\beta\).

        Finally, we claim \(A\in \mathcal B\). To see this, note
        that by the definition of \(h\), for all \(\xi < \eta\),
        \(\xi\in \bigcap_{x\in h(\xi)} f_x\circ g(\xi)\). By our choice of \(e\),
        for \(U\)-almost all \(\xi < \eta\), 
        \[f\circ g(\xi) = \bigcap_{x\in e\circ g(\xi)}f_x\circ g(\xi) = \bigcap_{x\in h(\xi)} f_x\circ g(\xi)\]
        Hence for \(U\)-almost all \(\xi < \eta\), \(\xi \in f\circ g(\xi)\),
        which means that \(A = [f]_D\in \mathcal B\).
    \end{proof}

\begin{thm}\label{thm:even}
    Suppose \(\lambda\) is rank Berkeley, \(\kappa \geq \lambda\) is rank reflecting,
    and \(\epsilon \geq \kappa\) is an even ordinal. Then \(\theta_{\epsilon}\) is a strong limit cardinal.
    \begin{proof}
        Fix an ordinal \(\eta < \theta_{\epsilon}\). Let
        \(j : V_{\epsilon+1}\to V_{\epsilon+1}\) be an elementary embedding
        such that \(j(\eta) = \eta\).
        Let \(\nu = |\beta_\kappa(\eta)|\) and fix a wellorder \(\preceq\) of \(\beta_\kappa(\eta)\)
        in \(j[\mathcal H_{\epsilon+1}]\). 
        (Note that \(P(\eta)\) and \(\beta_\kappa(\eta)\) belong to \(\mathcal H_{\epsilon+1}\).)

        For each \(\xi < \eta\), let \(E_\xi\) be the ultrafilter on \(\eta\) derived from \(j\) using \(\xi\).
        Let \(D_\xi = g_*(E_\xi)\) where \(g : \eta\to \kappa\) is minimal modulo \(E_\xi\) among all
        \(\sqsubseteq_{E_\xi}\)-maximal functions, and 
        let \(k_\xi : \Ult(P(\eta),D_\xi)\to \Ult(P(\eta),E_\xi)\)
        be the factor embedding. 
        Let \(\delta_\xi = j(g)(\xi)\), so that \(D_\xi = E_{\delta_\xi}\).
        Note that \(D_\xi\) and \(k_\xi\) are independent of the choice of \(g\).
        Let \(\mathscr B_\xi\) be the
        \(\Ult(P(\eta),D_\xi)\)-ultrafilter derived from \(k\). By \cref{lma:finest_partition} \ref{item:factor},
        \(\mathscr B_\xi\) generates a \(\kappa\)-complete filter.
        Applying
        \cref{thm:filter_extension}, let \(U_\xi\)
        be the \(\preceq\)-least \(\kappa\)-complete ultrafilter extending \(\mathscr B_\xi\),
        and let \(\upsilon_\xi\) be the \(\preceq\)-rank of \(U_\xi\).

        Now \(E_\xi\) is uniformly 
        definable in \(\mathcal H_{\epsilon+1}\) from \(\preceq\), \(j\restriction P_{\text{bd}}(\kappa)\),
        \(\delta_\xi\), and \(\upsilon_\xi\). Specifically, \(D_\xi = j_D^{-1}[U]\)
        where \(D\) is the ultrafilter on \(\kappa\) derived from \(j\restriction P_{\text{bd}}(\kappa)\)
        using \(\delta_\xi\) and \(U\) is the \(\upsilon_\xi\)-th element of \(\beta_\kappa(\eta)\)
        in the wellorder \(\preceq\). It follows that \(\langle E_\xi : \xi < \eta\rangle\)
        is definable in \(\mathcal H_{\epsilon+1}\) from \(\preceq\), \(j\restriction P_{\text{bd}}(\kappa)\),
        and the sequence of pairs of ordinals \(s = \langle (\delta_\xi,\upsilon_\xi) : \xi < \eta\rangle\).

        We claim that \(s\) is definable in \(\mathcal H_{\epsilon+1}\) from \(j[V_\epsilon]\) and 
        parameters in \(j[\mathcal H_{\epsilon+1}]\).
        Since \(s\) is a function from \(\eta\) to
        \(\kappa\times \ot(\preceq)\) and \(\ot(\preceq) < \theta_{\epsilon}\) by \cref{thm:uf_bound},
        \(s\) can be coded by a set of ordinals \(A\) whose supremum \(\alpha\)
        is strictly less than \(\theta_{\epsilon}\). 
        Fixing \(\gamma < \epsilon\) and a surjection \(\psi : V_\gamma\to \alpha\), 
        \(j[\alpha]\) is definable as usual in \(\mathcal H_{\epsilon+1}\)
        from \(j[V_\gamma]\) and \(j(\psi)\).
        Also \(A = j^{-1}[j(A)]\) is definable from \(j[\alpha]\) and \(j(A)\).
        This means that \(A\) is definable in \(\mathcal H_{\epsilon+1}\) from \(j[V_\epsilon]\) and 
        parameters in \(j[\mathcal H_{\epsilon+1}]\), and hence so is \(s\).

        Putting everything together, \(\langle E_\xi : \xi < \eta\rangle\)
        is definable in \(\mathcal H_{\epsilon+1}\) from \(j[V_\gamma]\) and
        parameters in \(j[\mathcal H_{\epsilon+1}]\), and hence so is \(j[P(\eta)]\),
        noting that for any \(S\subseteq \eta\),
        \[j(S) = \{\xi < \eta : S\in E_\xi\}\]
        Since \(j[P(\eta)]\) is definable in \(\mathcal H_{\epsilon+1}\) from \(j[V_\epsilon]\) and
        parameters in \(j[\mathcal H_{\epsilon+1}]\), there can be no surjection
        \(f : P(\eta)\to \theta_{\epsilon}\), lest
        \[j[\theta_\epsilon] = j(f)[j[P(\eta)]]\] be definable in 
        \(\mathcal H_{\epsilon+1}\) from \(j[V_\gamma]\) and parameters in
        \(\mathcal H_{\epsilon+1}\), contrary to \cref{thm:strong_suzuki}.
    \end{proof}
\end{thm}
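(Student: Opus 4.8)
The plan is to reduce the claim to the strong undefinability theorem (Theorem~\ref{thm:strong_suzuki}). Fix an ordinal $\eta<\theta_\epsilon$ and suppose toward a contradiction that $\theta_\epsilon\leq^*P(\eta)$. Fix an elementary embedding $j : V_{\epsilon+1}\to V_{\epsilon+1}$ with $j(\eta)=\eta$, extended as usual to $\mathcal H_{\epsilon+1}$ (such embeddings, which we may moreover take to satisfy $j(\kappa)=\kappa$, are available under our hypotheses). The goal is to show that $j\restriction P(\eta)$ is definable over $\mathcal H_{\epsilon+1}$ from $j[V_\gamma]$, for some $\gamma<\epsilon$, together with parameters lying in $j[\mathcal H_{\epsilon+1}]$. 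Granting this, since $j[P(\eta)]\in\mathcal H_{\epsilon+1}$ and $\mathcal H_{\epsilon+1}$ satisfies $\theta_\epsilon\leq^*P(\eta)$, Lemma~\ref{lma:super_surj} yields that $j[\theta_\epsilon]$ is definable over $\mathcal H_{\epsilon+1}$ from $j[V_\gamma]$ and parameters in $j[\mathcal H_{\epsilon+1}]$, contradicting Theorem~\ref{thm:strong_suzuki}. This gives $\theta_\epsilon\nleq^*P(\eta)$ for every $\eta<\theta_\epsilon$, i.e.\ that $\theta_\epsilon$ is a strong limit cardinal.

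The key identity is that for every $S\subseteq\eta$, $j(S)=\{\xi<\eta : S\in E_\xi\}$, where $E_\xi$ is the ultrafilter on $\eta$ derived from $j$ using $\xi$; so it suffices to define the sequence $\langle E_\xi : \xi<\eta\rangle$ over $\mathcal H_{\epsilon+1}$ from $j[V_\gamma]$ and parameters in $j[\mathcal H_{\epsilon+1}]$. Each $E_\xi$, being derived from the elementary embedding $j$, is a $\kappa$-wellfounded ultrafilter on $\eta$, so Lemma~\ref{lma:finest_partition} applies: taking $g$ to be the $\sqsubseteq_{E_\xi}$-maximal function into an ordinal below $\kappa$ that is minimal modulo $E_\xi$, $D_\xi=g_*(E_\xi)$, and $\mathscr B_\xi$ the ultrafilter on $\Ult(P(\eta),D_\xi)$ derived from the factor embedding, the filter generated by $\mathscr B_\xi$ is $\kappa$-complete. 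By Theorem~\ref{thm:filter_extension} it extends to a $\kappa$-complete ultrafilter on $\eta$, and since $\beta_\kappa(\eta)$ is wellorderable by Theorem~\ref{thm:uf_wo} we may fix, once and for all, a wellorder $\preceq$ of $\beta_\kappa(\eta)$ lying in $j[\mathcal H_{\epsilon+1}]$. Let $U_\xi$ be the $\preceq$-least such extension, $\delta_\xi=[g]_{E_\xi}$ (an ordinal below $\kappa$), and $\upsilon_\xi$ the $\preceq$-rank of $U_\xi$ (an ordinal below $\ot(\preceq)$). The content of this step is that $E_\xi$ is then uniformly reconstructible over $\mathcal H_{\epsilon+1}$ from the fixed parameters $\preceq$ and $j\restriction P_{\text{bd}}(\kappa)$ together with the single pair $(\delta_\xi,\upsilon_\xi)$: $D_\xi$ is determined by $j\restriction P_{\text{bd}}(\kappa)$ and $\delta_\xi$, $U_\xi$ is the $\upsilon_\xi$-th element of $\beta_\kappa(\eta)$ under $\preceq$, the residue $\mathscr B_\xi$ is recovered from $D_\xi$ and $U_\xi$, and $E_\xi$ is in turn recovered from $D_\xi$ and $\mathscr B_\xi$. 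Hence $\langle E_\xi : \xi<\eta\rangle$ is definable over $\mathcal H_{\epsilon+1}$ from $\preceq$, $j\restriction P_{\text{bd}}(\kappa)$, and $s=\langle(\delta_\xi,\upsilon_\xi) : \xi<\eta\rangle$; of these, $\preceq$ lies in $j[\mathcal H_{\epsilon+1}]$, and $j\restriction P_{\text{bd}}(\kappa)$ is definable over $\mathcal H_{\epsilon+1}$ from $j[V_\kappa]$ by the periodicity theorem (Theorem~\ref{thm:periodicity}), since $\kappa\leq\epsilon$.

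It remains to express $s$ appropriately. The essential point is that $s$ is a function from $\eta$ into $\kappa\times\ot(\preceq)$, and $\ot(\preceq)=|\beta_\kappa(\eta)|<\theta_\epsilon$ by Theorem~\ref{thm:uf_bound}; hence $s$ is coded by a set of ordinals $A$ whose supremum $\alpha$ lies below $\theta_\epsilon$. Fixing $\gamma<\epsilon$ and a surjection $\psi : V_\gamma\to\alpha$ --- which exist since $\alpha<\theta_\epsilon$ --- the set $j[\alpha]$ is definable over $\mathcal H_{\epsilon+1}$ from $j[V_\gamma]$ and $j(\psi)$ in the usual way, so $A=j^{-1}[j(A)]$ is definable from $j[\alpha]$ and $j(A)$, and thus $s$ is definable over $\mathcal H_{\epsilon+1}$ from $j[V_\gamma]$ and parameters in $j[\mathcal H_{\epsilon+1}]$. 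Combining this with the preceding paragraph, $\langle E_\xi : \xi<\eta\rangle$ --- and therefore $j\restriction P(\eta)$ and $j[P(\eta)]$ --- is definable over $\mathcal H_{\epsilon+1}$ from $j[V_\gamma]$ and parameters in $j[\mathcal H_{\epsilon+1}]$, completing the argument.

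I expect the main obstacle to be the reconstruction of $E_\xi$ from the ordinal pair $(\delta_\xi,\upsilon_\xi)$: one must arrange the ``finest partition'' decomposition of Lemma~\ref{lma:finest_partition} --- a quotient $D_\xi$ onto an ultrafilter on an ordinal below $\kappa$ together with a $\kappa$-complete residue $\mathscr B_\xi$ --- so that it coheres with the embedding $j$ and with a single canonically chosen wellorder $\preceq$ of $\beta_\kappa(\eta)$. This is precisely where the dedicated $\kappa$-complete ultrafilter theory is indispensable: filter extension (Theorem~\ref{thm:filter_extension}) replaces $\mathscr B_\xi$ by an honest ultrafilter, wellorderability (Theorem~\ref{thm:uf_wo}) lets that ultrafilter be named by an ordinal, and --- crucially --- the bound $|\beta_\kappa(\eta)|<\theta_\epsilon$ (Theorem~\ref{thm:uf_bound}) keeps the parameter $s$ small enough to be coded by a bounded subset of an ordinal below $\theta_\epsilon$; without that bound, $s$ could not be pulled back along $j$ from a level $\gamma<\epsilon$, and Theorem~\ref{thm:strong_suzuki} would not apply.
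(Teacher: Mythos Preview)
Your proposal is correct and follows essentially the same route as the paper: decompose each derived ultrafilter $E_\xi$ via Lemma~\ref{lma:finest_partition} into a small quotient $D_\xi$ and a $\kappa$-complete residue, name the residue by an ordinal using Theorems~\ref{thm:filter_extension} and~\ref{thm:uf_wo}, invoke Theorem~\ref{thm:uf_bound} to keep the parameter sequence $s$ below $\theta_\epsilon$, and finish with Theorem~\ref{thm:strong_suzuki}. Two minor remarks: the assumption $j(\kappa)=\kappa$ is unnecessary, and the definability of $j\restriction P_{\text{bd}}(\kappa)$ from $j[V_\kappa]$ does not need periodicity---it is immediate since $j\restriction V_\kappa$ is the inverse of the transitive collapse of $j[V_\kappa]$.
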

\section{The odd levels}
The following fact is implicit in the proof of \cref{cor:hod_bound}.
\begin{lma}
    Suppose \(\epsilon\) is an even ordinal and
    \(j :V_{\epsilon+1}\to V_{\epsilon+1}\) is an elementary embedding.
    Let \(E\) be the extender of length \(\theta_\epsilon\) derived from \(j\).
    Then \(j_E(A) = j(A)\) for any bounded subset \(A\) of \(\theta_{\epsilon+1}\).\qed
\end{lma}

\begin{cor}
    If \(\epsilon\) is an even ordinal and
    \(j :V_{\epsilon+1}\to V_{\epsilon+1}\) is an elementary embedding
    with critical point \(\kappa\),
    then the set of regular cardinals in the interval 
    \((\theta_\epsilon,\theta_{\epsilon+1})\) has cardinality less than \(\kappa\).
    \begin{proof}
        The set \(R\) of regular cardinals in the
        interval  \((\theta_\epsilon,\theta_{\epsilon+1})\) does not have cardinality exactly \(\kappa\)
        since \(|R|\) is definable over \(\mathcal H_{\epsilon+1}\)
        while \(\kappa\), being the critical point of the elementary embedding
        \(j : \mathcal H_{\epsilon+1}\to \mathcal H_{\epsilon+1}\), is not.

        Assume towards a contradiction \(|R| > \kappa\)
        and that \(\delta\) is the \(\kappa\)-th
        regular cardinal in the interval
        \((\theta_\epsilon,\theta_{\epsilon+1})\). 
        Since
        \(j : \mathcal H_{\epsilon+1}\to \mathcal H_{\epsilon+1}\) is elementary,
        \(j(\delta)\) is the \(j(\kappa)\)-th regular cardinal in the same interval.

        Letting \(E\) be the extender of length \(\theta_\epsilon\) derived from \(j\),
        \(j_E(\delta) = j(\delta)\), and \(j_E\) is continuous at \(\delta\)
        since each measure of \(j_E\) lies on a cardinal smaller than \(\delta\).
        It follows that \(\cf(j(\delta)) = \cf(\sup j_E[\delta]) = \delta\),
        which contradicts that \(j(\delta)\) is a regular cardinal larger than \(\delta\).
    \end{proof}
\end{cor}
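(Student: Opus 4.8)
The goal is to bound \(|R|\), where \(R\) is the set of regular cardinals in the interval \((\theta_\epsilon,\theta_{\epsilon+1})\). The plan is to split into two cases. First I would observe that \(|R|\) cannot equal \(\kappa\): extending \(j\) to an elementary embedding \(j : \mathcal H_{\epsilon+1}\to\mathcal H_{\epsilon+1}\) with critical point \(\kappa\), both \(\theta_\epsilon\) and \(\theta_{\epsilon+1}\) are definable over \(\mathcal H_{\epsilon+1}\) (the latter being its ordinal height), and \(\mathcal H_{\epsilon+1}\) correctly computes which cardinals below \(\theta_{\epsilon+1}\) are regular, since it contains all the relevant cofinal maps. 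Hence \(R\) is definable over \(\mathcal H_{\epsilon+1}\) without parameters, and therefore so is the ordinal \(|R|\), its ordertype; so \(j(|R|) = |R|\). As \(\kappa = \crit(j)\) is moved, \(|R|\neq\kappa\).

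The remaining work is to rule out \(|R| > \kappa\). Assume this holds and let \(\delta\) be the \(\kappa\)-th regular cardinal in \((\theta_\epsilon,\theta_{\epsilon+1})\). Since \(j : \mathcal H_{\epsilon+1}\to\mathcal H_{\epsilon+1}\) is elementary and fixes \(\theta_\epsilon\) and \(\theta_{\epsilon+1}\), \(j(\delta)\) is the \(j(\kappa)\)-th regular cardinal in the same interval; in particular \(j(\delta)\) is a regular cardinal and \(\delta < j(\delta) < \theta_{\epsilon+1}\). Now let \(E\) be the extender of length \(\theta_\epsilon\) derived from \(j\). By the lemma immediately preceding this corollary, \(j_E(\delta) = j(\delta)\), and likewise \(j_E(\beta) = j(\beta)\) for every \(\beta < \delta\). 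But each measure of \(E\) lies on an ordinal less than \(\theta_\epsilon\), and \(\delta > \theta_\epsilon\) is regular, so \(j_E\) is continuous at \(\delta\): any ordinal below \(j_E(\delta)\) is represented by a function in \(\mathcal H_{\epsilon+1}\) from an ordinal less than \(\theta_\epsilon\) into \(\delta\), whose range is bounded in \(\delta\). Therefore \(j(\delta) = j_E(\delta) = \sup j_E[\delta] = \sup j[\delta]\), and since \(j\) is order-preserving this supremum has cofinality \(\cf(\delta) = \delta\). This contradicts the regularity of \(j(\delta)\), since a regular cardinal strictly above \(\delta\) has cofinality strictly above \(\delta\). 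Hence \(|R| < \kappa\).

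The argument is short once the preceding lemma is available, and the only points requiring care are that \(\mathcal H_{\epsilon+1}\) is the correct structure over which to run the elementarity argument (so that ``the \(\kappa\)-th regular cardinal of the interval'' and \(|R|\) are genuinely definable and fixed, respectively) and that the length-\(\theta_\epsilon\) extender really is short enough relative to the regular cardinal \(\delta\) to be continuous there. The conceptual heart, and the step I expect to be the crux, is the clash in the last paragraph: elementarity forces \(j(\delta)\) to be a regular cardinal above \(\delta\), while the shortness of \(E\) forces \(j(\delta)\) to have cofinality exactly \(\delta\).
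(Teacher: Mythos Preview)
Your proposal is correct and follows essentially the same argument as the paper: first rule out \(|R|=\kappa\) by definability of \(|R|\) over \(\mathcal H_{\epsilon+1}\), then derive a contradiction from \(|R|>\kappa\) by comparing the elementarity of \(j\) at the \(\kappa\)-th regular cardinal \(\delta\) with the continuity of the length-\(\theta_\epsilon\) extender embedding at \(\delta\). Your added justifications (that \(\theta_{\epsilon+1}=\Ord\cap\mathcal H_{\epsilon+1}\), that \(\mathcal H_{\epsilon+1}\) sees the relevant cofinal maps, and the explicit reason \(j_E\) is continuous at \(\delta\)) are all correct and simply make explicit what the paper leaves implicit.
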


\begin{thm}\label{thm:odd}
    If \(\lambda\) is rank Berkeley, \(\kappa\geq\lambda\) is rank reflecting,
    and \(\epsilon\geq \kappa\) is an even ordinal,
    then there is a surjection from \(P(\theta_\epsilon)\) onto \(\theta_{\epsilon+1}\).
    \begin{proof}
        If \(j : V_{\epsilon+1}\to V_{\epsilon+1}\) is an elementary embedding,
        then the extender \(E\) of length \(\theta_{\epsilon}\) derived
        from \(j\) is definable over \(\mathcal H_{\epsilon+1}\)
        from \(j[\theta_\epsilon\cup j\restriction \Pbd(\kappa)]\) 
        and parameters in \(j[\mathcal H_{\epsilon+1}]\); this is
        as in \cref{thm:even},
        using the fact that \(|\bigcup_{\eta < \theta_{\epsilon}} \beta_\kappa(\eta)| =  \theta_\epsilon\)
        by \cref{thm:uf_bound}. Since \(j[\theta_{\epsilon+1}]\) can be recovered from 
        \(E\), we have that \(j_U[\theta_{\epsilon+1}]\in \Ult(V,U)\) where
        \(U\) is the ultrafilter on \(P(\Pbd(\kappa)\cup \theta_\epsilon)\) derived from 
        \(j\) using \(j[\theta_\epsilon\cup j\restriction \Pbd(\kappa)]\).
        Therefore by \cref{UltrafilterUndefinabilityThm}, there is a surjection from
        \(P(\Pbd(\kappa)\cup \theta_\epsilon)\) to \(\theta_{\epsilon+1}\).

        We may assume that \(\epsilon \geq \kappa+2\) since
        by \cref{lma:limit}, \(\theta_{\kappa+1} = \kappa^+ \leq^* P(\kappa)\).
        Since \(P(\Pbd(\kappa)\cup \theta_\epsilon) \leq V_{\kappa+1}\times P(\theta_\epsilon)\),
        there is a surjection \(f : V_{\kappa+1}\times P(\theta_\epsilon)\to \theta_{\epsilon+1}\).
        For each \(A\subseteq \theta_{\epsilon}\), let \(S_A = \{f(x,A) : x\in V_{\kappa+1}\}\), let
        \(\nu_A = \ot(S_A)\), and let
        \(g_A : \nu_A\to \theta_{\epsilon+1}\) be the increasing enumeration of \(S_A\).
        Since \(S_A\leq^* V_{\kappa+1}\), \(\nu_A < \theta_{\kappa+2}\).
        Moreover, \(\theta_{\epsilon+1} = \bigcup_{A\subseteq \theta_\epsilon}S_A\), and so the function
        \(g : P(\theta_\epsilon)\times \theta_{\kappa+2}\to \theta_{\epsilon+1}\)
        defined by \(g(A,\alpha) = g_A(\alpha)\) is a surjection. Obviously 
        \(|P(\theta_\epsilon)\times \theta_{\kappa+2}| = |P(\theta_\epsilon)|\), and so the proof is complete.
    \end{proof}
\end{thm}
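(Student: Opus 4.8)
The plan is to fix an elementary embedding $j : V_{\epsilon+1}\to V_{\epsilon+1}$ — available because $\epsilon\geq\kappa\geq\lambda$ and $\lambda$ is rank Berkeley — and to show that $j\restriction \Pbd(\theta_{\epsilon+1})$ carries far less information than it appears to: it is definable over $\mathcal H_{\epsilon+1}$ from the small seed $j[\theta_\epsilon\cup j\restriction \Pbd(\kappa)]$ together with a parameter in $j[\mathcal H_{\epsilon+1}]$. Once this is in hand, the surjection will fall out of \cref{UltrafilterUndefinabilityThm} together with a routine cardinal-arithmetic reduction.

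Concretely, I would first let $E$ be the $V_{\epsilon+1}$-extender of length $\theta_\epsilon$ derived from $j$. By the lemma preceding this theorem, $j_E(A)=j(A)$ for every bounded $A\subseteq\theta_{\epsilon+1}$, so knowing $E$ is the same as knowing $j\restriction \Pbd(\theta_{\epsilon+1})$, and in particular $j[\theta_{\epsilon+1}]$ can be read off from $E$. The heart of the argument is then to establish, exactly along the lines of the proof of \cref{thm:even}, that $E$ is definable over $\mathcal H_{\epsilon+1}$ from $j[\theta_\epsilon\cup j\restriction \Pbd(\kappa)]$ and parameters in $j[\mathcal H_{\epsilon+1}]$: for each $\eta<\theta_\epsilon$ one represents the derived ultrafilters $E_\xi$, $\xi<\eta$, by pairs of ordinals $(\delta_\xi,\upsilon_\xi)$ using the factorization of \cref{lma:finest_partition}, a wellorder of $\beta_\kappa(\eta)$ imported through $j$, and \cref{thm:filter_extension}. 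The key quantitative input is \cref{thm:uf_bound}, which gives $|\bigcup_{\eta<\theta_\epsilon}\beta_\kappa(\eta)| = \theta_\epsilon$, so that all of this coding data is captured by a bounded set of ordinals below $\theta_{\epsilon+1}$ — and such a bounded set is itself definable over $\mathcal H_{\epsilon+1}$ from $j[\theta_\epsilon]$ and a point in $\ran(j)$ by the usual trick of pulling back $j[\alpha]$ along a surjection $V_\gamma\to\alpha$ with $\gamma<\epsilon$.

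Granting this, I would let $U$ be the ultrafilter on $P\big(\Pbd(\kappa)\cup\theta_\epsilon\big)$ derived from $j$ using the seed $j[\theta_\epsilon\cup j\restriction \Pbd(\kappa)]$. Since $j[\theta_{\epsilon+1}]$ is definable over $\mathcal H_{\epsilon+1}$ from this seed, we get $j_U[\theta_{\epsilon+1}]\in\Ult(V,U)$; as the underlying set $X=\Pbd(\kappa)\cup\theta_\epsilon$ satisfies $X\times X\leq^* X$, \cref{UltrafilterUndefinabilityThm} applies and yields $\theta_{\epsilon+1}\leq^* P(X)$, i.e.\ a surjection from $P\big(\Pbd(\kappa)\cup\theta_\epsilon\big)$ onto $\theta_{\epsilon+1}$. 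It remains to replace $X$ by $\theta_\epsilon$. After disposing of the trivial case $\epsilon=\kappa$ via \cref{lma:limit} (where $\theta_{\kappa+1}=\kappa^+\leq^* P(\kappa)$), note $\Pbd(\kappa)\cup\theta_\epsilon$ injects into $V_{\kappa+1}\times\theta_\epsilon$, so there is a surjection $f:V_{\kappa+1}\times P(\theta_\epsilon)\to\theta_{\epsilon+1}$; for $A\subseteq\theta_\epsilon$ the slice $S_A=f[V_{\kappa+1}\times\{A\}]$ is a surjective image of $V_{\kappa+1}$, hence $\ot(S_A)<\theta_{\kappa+2}$, and enumerating each $S_A$ increasingly gives a surjection $P(\theta_\epsilon)\times\theta_{\kappa+2}\to\theta_{\epsilon+1}$; since $|P(\theta_\epsilon)\times\theta_{\kappa+2}| = |P(\theta_\epsilon)|$, we are done.

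The step I expect to be the main obstacle is the definability of the extender $E$ over $\mathcal H_{\epsilon+1}$ from the small seed: this is precisely where the machinery of the preceding sections is needed — the \HOD{} bound of \cref{cor:hod_bound}, the coding lemma, the ultrafilter count of \cref{thm:uf_bound}, and the finest-partition factorization of \cref{lma:finest_partition} — and it is essentially the same work carried out in \cref{thm:even}. Everything downstream (the invocation of \cref{UltrafilterUndefinabilityThm} and the final reduction to $P(\theta_\epsilon)$) is comparatively soft, which is presumably why the author simply writes that the definability claim goes ``as in \cref{thm:even}''.
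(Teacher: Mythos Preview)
Your proposal is correct and follows the paper's proof essentially step for step: derive the length-\(\theta_\epsilon\) extender \(E\) from \(j\), argue as in \cref{thm:even} (using \cref{thm:uf_bound}) that \(E\) is definable from the seed \(j[\theta_\epsilon\cup j\restriction \Pbd(\kappa)]\) and parameters in \(j[\mathcal H_{\epsilon+1}]\), recover \(j[\theta_{\epsilon+1}]\) from \(E\), apply \cref{UltrafilterUndefinabilityThm} to the derived ultrafilter on \(P(\Pbd(\kappa)\cup\theta_\epsilon)\), and finish with the slicing argument that reduces \(V_{\kappa+1}\times P(\theta_\epsilon)\) to \(P(\theta_\epsilon)\) via \(\theta_{\kappa+2}\). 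The only imprecision is the transition ``\(\Pbd(\kappa)\cup\theta_\epsilon\) injects into \(V_{\kappa+1}\times\theta_\epsilon\), so there is a surjection \(f:V_{\kappa+1}\times P(\theta_\epsilon)\to\theta_{\epsilon+1}\)'': what is actually needed (and what the paper writes) is \(P(\Pbd(\kappa)\cup\theta_\epsilon)\leq V_{\kappa+1}\times P(\theta_\epsilon)\), which follows immediately from splitting a subset of the union into its two pieces.
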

\section{Questions}
Assume there is an elementary embedding from the universe of sets to itself.
\begin{qst}
    For sufficiently large even ordinals \(\epsilon\), is \(\theta_{\epsilon+1} = \theta_\epsilon^+\)?
\end{qst}
This is probably the most glaring question left open by our theorems here.
But many other combinatorial questions remain:
\begin{qst}
    For sufficiently large even ordinals \(\epsilon\), is it true that for all 
    \(\eta < \theta_{\epsilon+2}\), there is a surjection from \(V_{\epsilon+1}\) onto \(P(\eta)\)?
    Does the coding lemma hold?
\end{qst}
    Define a sequence of ordinals \(\langle \nu_\alpha : \alpha\in \Ord\rangle\)
    by setting \(\nu_{\alpha+1} = \aleph^*(P(\nu_\alpha))\) and \(\nu_\gamma = \sup_{\alpha < \gamma}\nu_\alpha\) for \(\gamma\) a limit ordinal. 
    The arguments of this paper
    show that if there is an elementary embedding from the universe of sets to itself,
    then \(\nu_\alpha\) is a strong limit cardinal for all sufficiently large ordinals \(\alpha\).
    \begin{qst}
        Is \(\theta_{\gamma+2n} = \nu_{\gamma+n}\) for all sufficiently large limit ordinals
        \(\gamma\) and all \(n < \omega\)?
    \end{qst}
    Certain arguments in this paper require simulating the Axiom of Choice
    using rank reflecting cardinals, and for this reason the following question remains open:
    \begin{qst}
        If \(\lambda\) is rank Berkeley
        --- or just assuming there is an elementary embedding from \(V_{\lambda+2}\) to itself ---
        is \(\theta_{\lambda+2}\) a strong limit cardinal?
    \end{qst}

    By analogy with the choiceless axioms, it is natural to speculate that perhaps the right higher-order 
    generalization of the Axiom of Determinacy is some kind of regularity property for subsets of \(V_{\omega+2n+1}\)
    for \(n < \omega\).
    \begin{qst}
        Is there an extension of the Axiom of Determinacy that implies \(\theta_{\omega+4}\) is a strong limit cardinal
        and \(\theta_{\omega+5}\) is its successor?
        If \(\theta_{\omega+2}\) is a strong partition cardinal, does this hold?
    \end{qst}
    \bibliography{Bibliography.bib}
    \bibliographystyle{unsrt}
\end{document}